\tikzset{snake it/.style={decorate, decoration=snake}}
\theoremstyle{definition}
\newtheorem{definition}{Definition}[section]
\newtheorem{rmk}[definition]{Remark}
\theoremstyle{plain}
\newtheorem{theorem}[definition]{Theorem}
\newtheorem{cor}[definition]{Corollary}
\newtheorem{lem}[definition]{Lemma}
\newcommand{\real}{\mathbb{R}}
\newcommand{\mb}{\mathbb}
\newcommand{\mc}{\mathcal}
\newcommand{\mf}{\mathfrak}
\newcommand{\bs}{\boldsymbol}
\newcommand{\vol}{\textup{vol}}
\newcommand{\s}{\mathbb{S}}
\newcommand{\Z}{\mathbb{Z}}
\newcommand{\dd}{\textup{d}}
\newcommand{\Ltwo}{\textup{L}^{2}}
\newcommand{\Lone}{\textup{L}^{1}}
\newcommand{\ignore}[1]{}
\newif\ifdraft\drafttrue
\numberwithin{equation}{section}
\newcommand\reallywidehat[1]{%
\savestack{\tmpbox}{\stretchto{%
  \scaleto{%
    \scalerel*[\widthof{\ensuremath{#1}}]{\kern-.6pt\bigwedge\kern-.6pt}%
    {\rule[-\textheight/2]{1ex}{\textheight}}%WIDTH-LIMITED BIG WEDGE
  }{\textheight}% 
}{0.5ex}}%
\stackon[1pt]{#1}{\tmpbox}%
}
\begin{document}

\thanks{This paper was written during an REU project organized by the department of mathematics of the University of Michigan, Ann Arbor, whose support is gratefully acknowledged. The third author is sponsored by the Derksen Research Incentive fund}

\title{Higher-Dimensional Moving Averages and Submanifold Genericity}

\author[Jiajun Cheng]{Jiajun Cheng}
\address{Department of Mathematics, University of California, Riverside CA, USA} 
\email{djcheng@umich.edu}

\author[Reynold Fregoli]{Reynold Fregoli}
\address{{Department of Mathematics, University of Michigan, Ann Arbor MI, USA}} 
\email{{reynldfregoli@gmail.com}}

\author[Beinuo Guo]{Beinuo Guo}
\address{Department of Mathematics, University of Michigan, Ann Arbor MI, USA} 
\email{carlguo@umich.edu}

\subjclass{37A25, 37A30, 28D05, 28D15}

\begin{abstract}
We generalize results of Jones and Olsen on multi-parameter moving ergodic averages to measure-preserving actions of  $\mb R^d$ for $d\geq 1$. In particular, we give necessary and sufficient conditions for the pointwise convergence of averages over families of boxes in $\mb R^d$. As an application of our characterization, we show that averages along dilates of "locally flat" submanifolds in $\mb R^d$ do not necessarily converge point-wise for bounded measurable functions. This is closely related to the concept of submanifold-genericity recently introduced in \cite{BFK25}.
\end{abstract}

\maketitle

\section{Introduction}

Let $(X,\mu)$ be a non-atomic probability space and let $T_1,\dotsc, T_d$ be  commuting measure-preserving transformations on the space $X$. Given sequences $\{\bs n_k\}\subset \mb Z^d$ and $\{\bs l_k\}\subset \mb N^d$ ($k\in \mb N$), put
$$B_k:=[n_{k1},n_{k1}+l_{k1})\times\dotsm\times[n_{kd},n_{kd}+l_{kd})$$
and let $\mc B:=\{B_k\}$. To any box $B_k\in \mc B$ associate an averaging operator $A_k$ defined by
$$A_kf(x):=\frac{1}{l_{k1}\dotsm l_{kd}}\sum_{\bs j\in B_k\cap\mb Z^d}f\left(T_1^{j_1}\dotsb T_d^{j_d}x\right)$$
for $f\in \Lone(X)$, and denote by ${M_{\mc B}}$ the maximal operator
$${M_{\mc B}}f(x):=\sup_{B_k\in\mc B}\frac{1}{l_{k1}\dotsm l_{kd}}\sum_{\bs j\in B_k\cap\mb Z^d}\left|f\left(T_1^{j_1}\dotsb T_d^{j_d}x\right)\right|.$$
In \cite{BJR90}, Bellow, Jones, and Rosenblatt gave necessary and sufficient conditions on the sequence $\mc B$ for $M_{\mc B}$ to satisfy a maximal inequality in dimension $d =1$. Jones and Olsen \cite{JONES1991127} generalized the result to the case $d \geq 1$.

Let us briefly recall their characterization (see \cite[Theorem 2.1]{JONES1991127}). For each $i=1,\dotsc,d$ let
$$\Omega_{i}:=\{(n_{ki},l_{k,i}):k\in \mb N\}.$$
For fixed $\alpha>0$ and $\lambda\in\mb N$ define
$$\Omega_i^{(\alpha)}:=\left\{(x,y)\in \Z \times \mb N:|x-n|\leq \alpha(y-l)\mbox{ for some }(n,l)\in \Omega_i\right\}$$
and
$$\Omega_i^{(\alpha)}(\lambda):=\left\{x\in \mb Z:(x,\lambda)\in\Omega_i^{(\alpha)}\right\}.$$
Then, according to \cite[Theorem 2.1 Part (a)]{JONES1991127}, $M_{\mc B}$ is of strong type $(p,p)$ for any $p>1$ if and only if the following condition on the sequence $\mc B$ is satisfied for all coordinates $i=1, \ldots, d$
\begin{equation}
\tag{$C_1$}
\label{eq:cone}
\exists\, A,\alpha>0:\quad\#\Omega_i^{(\alpha)}(\lambda)\leq A\lambda\quad\mbox{for all }\lambda\in \mb N.
\end{equation}
Condition \eqref{eq:cone} asserts, in other words, that the horizontal cross-section of all the cones of aperture $\alpha$ above points in the sequence $\Omega_i$ at fixed height must grow linearly as the height increases (see picture below). For example, the sequence $(k,rk)$ for any $r\in \mb N$ has this property, while the sequence $(k,\sqrt{k})$ does not.

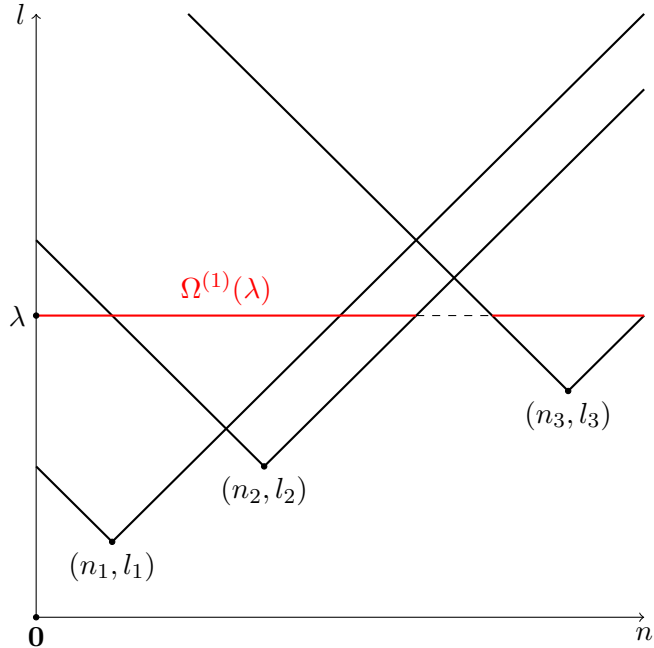
\begin{figure}[!h]
\centering

\begin{tikzpicture}
\draw[->, black, thin] (0,0) -- (0,8); 
\draw[->, black, thin] (0,0) -- (8,0); 

\filldraw[black] (0,0) circle (1pt) node[anchor=north]{$\bs 0$};
\draw[black,thick] (0,8) node[anchor=east]{$l$};
\draw[black,thick] (8,0) node[anchor=north]{$n$};

\filldraw[black] (1,1) circle (1pt) node[anchor=north]{$(n_{1i},l_{1i})$};
\filldraw[black] (3,2) circle (1pt) node[anchor=north]{$(n_{2i},l_{2i})$};
\filldraw[black] (7,3) circle (1pt) node[anchor=north]{$(n_{3i},l_{3i})$};

\draw[black,thick] (1,1) -- (0,2);
\draw[black,thick] (1,1) -- (8,8);

\draw[black,thick] (3,2) -- (0,5);
\draw[black,thick] (3,2) -- (8,7);

\draw[black,thick] (7,3) -- (2,8);
\draw[black,thick] (7,3) -- (8,4);

\draw[red,thick] (0,4) -- (5,4);
\draw[black, dashed, thin] (5,4) -- (6,4);
\draw[red,thick] (6,4) -- (8,4);

\filldraw[black] (0,4) circle (1pt) node[anchor=east]{$\lambda$};

\draw[red,thick] (2.5,4) node[anchor=south]{$\Omega_i^{(1)}(\lambda)$};

\end{tikzpicture}

\caption{Set $\Omega_i^{(1)}(\lambda)$ for some sequence $(n_{ki},l_{ki})$.}

\end{figure}

\begin{rmk}
Note that when $d=1$, \cite{BJR90}[Theorem 1 Part (a)] shows that $M_{\mc B}$ is of weak type $(1,1)$. For $d\geq 2$, this does not follow from the proof in \cite{JONES1991127}. In fact, as discussed in \cite[Remark 1]{JONES1991127}, there exist sequences of boxes $\mc B$ satisfying \eqref{eq:cone} such that $M_{\mc B}$ is not of weak type $(1,1)$.
\end{rmk}

If the operator $M_{\mc B}$ is of strong type $(p,p)$ and the action generated by the transformations $T_1, \ldots, T_d$ is ergodic, it follows from a standard argument the sequence of functions $A_kf$ converges point-wise for any $f\in\mathrm{L}^p$. \cite[Theorem 3.2]{JONES1991127} shows additionally that this is not the case if Condition \eqref{eq:cone} fails in at least one coordinate.

In this paper, we generalize \cite[Theorem 2.1, Part (a)]{JONES1991127} and \cite[Theorem 3.2]{JONES1991127} to measurable actions of $\mb R^d$ for $d\geq 1$. When $d=1$, an alternative proof can be found in \cite{Ferrando_1995}[Section 4] as a special case (sequence of moving cubes of dimension 1). Our main motivation for considering such theorems is to study the property of submanifold genericity, recently introduced in \cite{BFK25}. In particular, based on our extension of Jones and Olsen's result we are able to show that in aperiodic actions of $\mb R^d$ an ergodic theorem for dilates of general submanifolds is not to be expected, unless the test functions are very regular (e.g., smooth). We now proceed to lay out our results in greater detail.

\subsection{The Continuous Case}

% We will now state the continuous version of the results as of in \cite{JONES1991127}.

% \medskip

Let $(X,\mu)$ be a non-atomic probability space and let $U_{\bs t}$ for $\bs t\in \mb R^d$ denote a measurable and measure-preserving $d$-dimensional flow on $X$ (which is our standing assumption throughout this section). Given sequences $\{\bs w_k\}\subset\mb R^d$ and $\{\bs s_k\}\subset(0,+\infty)^d$, put
$$B_k:=[w_{k1},w_{k1}+s_{k1})\times\dotsm\times[w_{kd},w_{kd}+s_{kd})$$
and let $\mc B:=\{B_k\}$. To any box $B_k\in \mc B$ associate a continuous averaging operator $R_k$ defined by
$$R_kf(x):=\frac{1}{s_{k1}\dotsm s_{kd}}\int_{B_k}f\left(U_{\bs t}x\right)\,\dd\bs t$$
for $f\in \Lone(X)$, and let ${N_{\mc B}}$ denote the maximal operator
$${N_{\mc B}}f(x):=\sup_{B_k\in\mc B}\frac{1}{s_{k1}\dotsm s_{kd}}\int_{B_k}\left|f\left(U_{\bs t}x\right)\right|\,\dd\bs t.$$
For $i=1,\dotsc,d$ let
$$\Omega_{i}:=\{(w_{ki},s_{k,i}):k\in \mb N\}$$
and for $\alpha>0$ define
$$\Omega_{i}^{(\alpha)}:=\{(x,y)\in \mb R\times (0, +\infty):|x-t|\leq \alpha(y-s)\mbox{ for some }(t,s)\in \Omega_i\}.$$
Finally, for any real $\lambda>0$ put
$$\Omega_{i}^{(\alpha)}(\lambda):=\left\{x\in \mb R:(x,\lambda)\in \Omega_i^{(\alpha)}\right\}.$$
Then, in analogy to \cite{JONES1991127}[Theorem 2.1 Part (a)], we have the following.

\begin{theorem}
\label{thm:cont1}
Assume that for all $i=1,\dotsc,d$
\begin{equation}
\label{eq:allomegacont}
\tag{$\tilde C_i$}
\exists\, A,\alpha>0:\quad \textup{Leb}\left(\Omega_{i}^{(\alpha)}(\lambda)\right)\leq A\lambda\quad\mbox{for all }\lambda>0,   
\end{equation}
where $\textup{Leb}$ denotes the Lebesgue measure on $\mb R$. Then the operator ${N_{\mc B}}$ is of strong type $(p,p)$ for any $p>1$.   
\end{theorem}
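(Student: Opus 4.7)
The plan is to adapt the proof of Theorem \ref{thm:disc1} to the continuous setting, with integrals replacing sums throughout. The key step is to dominate the $d$-dimensional maximal operator $N_{\mc B}$ by an iterated composition of one-dimensional maximal operators and then to establish the $L^p$-boundedness of each of the latter.

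First, using Fubini's theorem together with the fact that the flows $t\mapsto U_{t\bs e_i}$ commute, one writes the average $R_k f$ as an iterated integral in the coordinates $t_1,\dotsc,t_d$. Taking absolute values inside and passing to the supremum over $k$ one variable at a time (using monotonicity of averages of non-negative functions) yields the pointwise domination
$$N_{\mc B} f(x)\leq (\tilde M_1\circ \tilde M_2\circ\dotsm\circ \tilde M_d)(|f|)(x),$$
where for each $i$ the operator $\tilde M_i$ is the one-dimensional continuous maximal operator associated to the one-parameter flow $t\mapsto U_{t\bs e_i}$ and the sequence $\Omega_i$, namely
$$\tilde M_i g(x):=\sup_k\frac{1}{s_{ki}}\int_{w_{ki}}^{w_{ki}+s_{ki}}|g(U_{t\bs e_i}x)|\,\dd t.$$

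Next, I would prove that each $\tilde M_i$ is bounded on $L^p(X)$ for $p>1$ as a consequence of \eqref{eq:allomegacont}. This is the continuous one-dimensional analog of the maximal inequality in \cite[Theorem 1, Part a)]{BJR90}. Via a Calderón-type transference argument it reduces to the corresponding statement on $L^p(\mb R)$ for the translation flow, which in turn follows from a continuous Vitali-type covering lemma: condition \eqref{eq:allomegacont} yields a bounded-overlap covering estimate for the intervals $\{[w_{ki}, w_{ki}+s_{ki})\}$ in terms of Lebesgue measure, producing a weak-type $(1,1)$ inequality for $\tilde M_i$ that interpolates with the trivial $L^\infty$ bound to give $L^p$-boundedness. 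Iterating the resulting one-dimensional bounds then yields
$$\|N_{\mc B} f\|_p\leq \|\tilde M_1\dotsm \tilde M_d|f|\|_p\leq C_1\dotsm C_d\|f\|_p,$$
with $C_i$ the operator norm of $\tilde M_i$ on $L^p(X)$, as desired.

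The main technical obstacle is the one-dimensional continuous maximal inequality. While the geometric essence of the cone condition is preserved from the discrete case, the passage from counting to Lebesgue measure demands care in the covering argument, particularly for measurability of the maximal function and the extraction of suitable countable disjoint subfamilies of covering intervals. By contrast, the iterated decomposition and the final iteration over $i=1,\dotsc,d$ are largely formal and should go through essentially as in the proof of Theorem \ref{thm:disc1}.
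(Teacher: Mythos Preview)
Your outline is correct and matches the paper's approach: dominate $N_{\mc B}$ by the composition $\tilde M_1\circ\dotsm\circ\tilde M_d$ of one-dimensional maximal operators exactly as in Theorem~\ref{thm:disc1}, then establish the $L^p$-boundedness of each $\tilde M_i$ by transference, a covering argument, weak-type $(1,1)$, and Marcinkiewicz interpolation.

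One point of imprecision worth flagging: your sentence ``condition \eqref{eq:allomegacont} yields a bounded-overlap covering estimate for the intervals $\{[w_{ki},w_{ki}+s_{ki})\}$'' does not quite capture the mechanism. Those intervals themselves can overlap arbitrarily (think $w_{ki}\equiv 0$). What the cone condition actually controls is the set of \emph{shifts} $t$ for which some translated interval $[t+w_k,t+w_k+s_k)$ falls inside a given set. The paper (following \cite{BJR90}) makes this precise in two steps: first it shows that whenever the moving average exceeds $\lambda$ at a point, a full shifted interval lies inside the super-level set $\{f^*_{T,x}>\lambda\}$ of the Hardy--Littlewood maximal function (Lemma~\ref{lem:lemma1}); then it applies Vitali to an open neighborhood of that super-level set and uses the cone condition to bound the measure of the bad set by $\textup{Leb}(\Omega^{(\alpha)}(C_\alpha r_i))\leq AC_\alpha r_i$ summed over the Vitali intervals (Lemma~\ref{lem:lemma2}). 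This is the ``care in the covering argument'' you anticipate, and it is the one place where genuine work beyond the discrete case is needed.
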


As a corollary of \ref{thm:cont1}, we show the following.

\begin{cor}
\label{cor:cont1}
Assume that the flow $U_{\bs t}$ acts ergodically on the space $(X,\mu)$ and that Condition \eqref{eq:allomegacont} is satisfied for all $i=1,\dotsc,d$. Then for any $f\in \textup{L}^p(X)$ with $p>1$ and $\mu$-a.e. $x\in X$ we have that
$$R_{k}f(x)\to\mu(f)\quad\mbox{as }k\to\infty.$$
\end{cor}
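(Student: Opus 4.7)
The plan is to combine the maximal inequality of Theorem~\ref{thm:cont1} with a standard Banach principle argument. From the strong $(p,p)$ bound on $N_{\mc B}$, a Chebyshev/Cauchy-sequence argument shows that the set
\[
E := \{f \in L^p(X) : R_k f(x) \to \mu(f) \text{ for } \mu\text{-a.e.\ } x \in X\}
\]
is closed in $L^p(X)$. It therefore suffices to exhibit a dense subspace of $L^p(X)$ on which pointwise convergence to $\mu(f)$ holds.

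I would take this dense subspace to be the linear span of (i) the constants and (ii) the bounded ``coboundaries'' $g = h \circ U_{\bs t_0} - h$, with $h \in L^\infty(X)$ and $\bs t_0 \in \mb R^d$. On constants the claim is trivial. For a coboundary, a translation in the integration variable gives
\[
R_k g(x) = \frac{1}{s_{k1}\dotsm s_{kd}}\left(\int_{B_k + \bs t_0} - \int_{B_k}\right) h(U_{\bs s} x)\,\dd\bs s,
\]
and the elementary symmetric-difference estimate $\textup{Leb}(B_k \triangle (B_k + \bs t_0)) \leq (s_{k1}\dotsm s_{kd})\sum_i |t_{0i}|/s_{ki}$ yields $|R_k g(x)| \leq \|h\|_\infty \sum_{i=1}^d |t_{0i}|/s_{ki}$. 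This tends to zero uniformly in $x$ as long as $s_{ki}\to \infty$ for each $i$ --- a hypothesis which is implicit to the statement and in fact necessary for the conclusion, since a constant sequence of boxes trivially satisfies \eqref{eq:allomegacont} while failing to produce ergodic limits.

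Density of this span in $L^p(X)$ then reduces to ergodicity of the flow: the $L^2$-orthogonal complement of the span consists of mean-zero functions simultaneously invariant under $U_{\bs t}$ for every $\bs t \in \mb R^d$, which must vanish by the mean ergodic theorem for $\mb R^d$-actions. This yields $L^2$-density immediately, and $L^p$-density for $1 < p \leq 2$ by H\"older. The main obstacle is the extension to $p>2$; a workable strategy is to first approximate a bounded mean-zero function in $L^2$ by coboundary sums, then truncate the approximants uniformly at the $L^\infty$-norm of the target, and upgrade $L^2$-convergence to $L^p$-convergence via dominated convergence, closing the argument through the Banach principle.
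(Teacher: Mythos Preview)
Your approach is correct and differs from the paper's. Two minor corrections first. The $p>2$ ``obstacle'' is illusory: on a probability space $L^p(X)\subset L^2(X)$ for $p\geq 2$, so once the $L^2$ Banach-principle argument (using the strong $(2,2)$ bound from Theorem~\ref{thm:cont1} and $L^2$-density of constants plus coboundaries) yields $R_kf\to\mu(f)$ a.e.\ for every $f\in L^2$, the case $p>2$ follows immediately. Your truncation fix is therefore unnecessary---and it would not work as written anyway, since truncating a finite sum of coboundaries destroys the coboundary structure on which your pointwise estimate relies. As for $s_{ki}\to\infty$: the paper does not treat this as an added hypothesis but claims (Remark~\ref{rmk:lkitoinfty}) that it is forced by \eqref{eq:allomegacont}. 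Your constant-sequence counterexample shows that deduction tacitly assumes each $\Omega_i$ is an infinite set, so you are in fact being more careful than the paper here.

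On the comparison: the paper takes a less direct route. It first proves a mean ergodic theorem in $L^2$ (Lemma~\ref{lem:ltwo}), then uses an approximate-commutation identity $A_k\circ A_hf=A_kf+O_{\bs n_h,\bs l_h}\bigl((l_{k1}\cdots l_{kd})^{-1}\|f\|_\infty\bigr)$ (Lemma~\ref{lem:comp}) together with the maximal inequality to upgrade mean convergence to pointwise convergence on $L^\infty$, and finally passes to $L^p$ by density. Your single coboundary estimate replaces both lemmas at once and is the more elementary argument; the paper's route has the expository advantage of isolating the mean ergodic step, but pays for it with the extra smoothing trick of Lemma~\ref{lem:comp}.
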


If Condition \eqref{eq:allomegacont} is not satisfied for at least one $1\leq i\leq d$, on the other hand, and the action of $\mb R^d$ on $X$ is aperiodic, the conclusion of Corollary \ref{cor:cont1} fails in a strong sense (cf. \cite[Theorem 3.2]{JONES1991127}). Recall that a $d$-dimensional flow $U_{\bs t}$ on $X$ is said to be \textit{aperiodic} if there exists a set $N\subset X$, with $\mu(N)=0$, such that $U_{\bs t}x\neq x$ for all $x\in X\setminus N$ and all $\bs t \in \real^d\setminus\{\bs 0 \}$.

Before formally stating the converse of Corollary \ref{cor:cont1}, we recall the definition of \textit{mixing family} introduced by Sawyer. Following \cite{Sawyer}, we say that a family of measure-preserving transformations $\{S_{h}\}$ on $X$ is \textit{mixing} if for any pair of measurable subsets $A, B$ of $X$ and any $\rho>1$
\begin{equation}
\label{eq:Sawmix}
\exists\, h:\quad\mu(A\cap S^{-1}_{h}(B))< \rho \cdot \mu(A)\mu(B).    
\end{equation} 

\begin{rmk}
\label{rmk:ergismix}
Note that the notion of "mixing family" differs from that of "mixing action". More precisely, suppose that $\{S_{h}\}$ is a locally compact group of measure-preserving transformations on $X$. Then Condition \eqref{eq:Sawmix} is equivalent to
$$\inf_{h} \mu\left(A\cap S_{h}(B)\right)\leq \mu(A)\mu(B),$$    
which differs from the hypothesis that in a mixing action
$$\lim_{S_{h}\to\infty}\mu\left(A\cap S_{h}(B)\right)= \mu(A)\mu(B).$$
In fact, it is possible to show that any group of measure-preserving transformations that acts ergodically on $X$ satisfies \eqref{eq:Sawmix} (see, e.g., \cite[Lemma 1, page 163]{Sawyer}). This is an easy exercise if $\{S_{h}\}$ is just the group generated by one ergodic transformation on $X$.
\end{rmk}

We are now in a position to state our next result.

\begin{theorem}
\label{thm:cont2}
Assume the flow $U_{\bs t}$ is aperiodic and that it commutes with a mixing family of transformations $\{S_h\}$ on $X$. Then, if for some $1\leq i\leq d$ it holds that
\begin{equation}
\label{eq:ctslinearfail}
\tag{$!\tilde C_i$}
\forall\,A,\alpha>0\quad \textup{Leb}\left(\Omega_{i}^{(\alpha)}(\lambda)\right)\geq A\cdot\lambda\quad\mbox{for an unbounded set of }\lambda>0, 
\end{equation}
the operators $R_k$ have the "strong sweeping out" property, that is, for any $\varepsilon>0$ there exists a set $B_{\varepsilon}\subset X$, with $\mu(B_{\varepsilon})<\varepsilon$, such that for $\mu$-a.e. $x\in X$ it holds that
$$\liminf_{k}R_k\chi_{B_{\varepsilon}}(x)=0\quad\mbox{and}\quad\limsup_{k}R_k\chi_{B_{\varepsilon}}(x)=1.$$
\end{theorem}

\begin{rmk}
\label{rmk:subgrab}
It is worth pointing out that, in view of Remark \ref{rmk:ergismix}, any subgroup of an Abelian group that acts ergodically on the space $(X,\mu)$ commutes with a mixing family, even if the subgroup itself does not act ergodically.    
\end{rmk}

In the next subsection, we present the main application of Theorem \ref{thm:cont2}.

\subsection{Failure of Submanifold Genericity for Essentially Bounded Functions}

Let $(X,\mu)$ be a probability space equipped with a measure preserving action of $\mb R^d$, which we denote by $\bs a.x$ for all $\bs a\in\mb R^d$ and $x\in X$. In \cite{BFK25}, the notion of submanifold genericity for a measure $\nu$ on $X$ was introduced, in connection with certain problems in Diophantine approximation. We recall it here, for the convenience of the reader.

Let $M\subset\mb R^d$ be a compact $m$-dimensional $\mathscr{C}^1$ submanifold of $\mb R^d$ and let $\mc F\subset \Lone(X)$ be an arbitrary collection of functions. Let $\vol_m$ denote the $m$-dimensional volume measure induced by the Euclidean metric on $\mb R^d$. We say that a measure $\nu$ on $X$ (potentially equal to $\mu$) is $(M,\mc F)$-generic if for $\nu$-almost every $x\in X$ it holds that
\begin{equation}
\label{eq:conv}
\frac{1}{t^m\cdot \textup{vol}_m(M)}\int_{tM}f(\bs a.x)\,\dd\textup{vol}_m( \bs a)\to\mu(f)\quad\mbox{as }t\to\infty.    
\end{equation}

We also say that a measure $\nu$ is $(m,\mc F)$-generic (for fixed $1\leq m\leq d$) if for all compact $m$-dimensional $\mathscr{C}^1$ submanifolds $M$ in $\mb R^d$ it holds that $\nu$ is $(M,\mc F)$-generic. 

\medskip

When $M$ is a $d$-dimensional bounded positive-measure set, $(M, \Lone(X))$-genericity is equivalent to ergodicity of the $\mb R^d$-action (see \cite[Theorem 1.2]{Lindenstrauss}). However, if $M$ is a proper submanifold of $\mb R^d$, the convergence in \eqref{eq:conv} is more delicate and may not occur for all $f\in\Lone(X)$. For example, when $M=\s^{d-1}$ (i.e., the $(d-1)$-dimensional sphere in $\mb R^d$) and $d \geq 3$, Jones \cite[Theorem 2.1]{Jones} showed that $\mu$ is $(M, \textup{L}^{p}(X))$-generic for any $p>d/(d-1)$ and that the bound on $p$ is sharp (see \cite[Theorem 2.3]{Jones} for a constructive counterexample). \cite{Lacey} extended this result to the case $d=2$. Such analytical arguments, however, do not apply to different manifolds $M$, such as (portions of) the boundary of a hypercube.

% More generally in \cite{Bjorklund}(\red{this paper may have problems}), let $(X, \mu)$ be a standard borel probability space, Theorem 3.8 implies that for any smooth compact submanifold $M \subset S\subset \real^d$ with $d \geq 3$ where $S$ is a smooth hypersurface in the $\real^d$-action with nowhere vanishing sectional curvature, then $\mu$ is $(M, \mathrm{L}^p(X))$-generic for $p >d/(d-1)$. Similarly, this argument only works for non-vanishing curvatures in order to guarantee the corresponding volume measure to have large Fourier dimension.

If one restricts to very special families of functions $\mc F$, on the other hand, there are actions for which point-wise convergence in \eqref{eq:conv} occurs for virtually any sufficiently regular submanifold of $\mb R^d$. For example, let $X=G/\Gamma$, where $G$ is a semisimple Lie group (with no factors of rank $1$) and $\Gamma$ is a lattice in $G$. Let $\mu$ denote the canonical left-invariant measure on $G/\Gamma$ and let $A<G$ be a Cartan subgroup, whose Lie algebra will be denoted by $\mf a$. Consider the action of $\mf a$ on $X$ given by $x\mapsto \exp(a)x$  (which is identifiable with an $\mb R^{d}$ action on $X$, if $G$ is of rank $d$). Then \cite[Theorem 1.3]{BFK25} asserts that the measure $\mu$ on $X$ is $(k,\mathscr{C}^{\infty}_c(X))$-generic for all $1\leq k\leq d$. Here, $\mathscr{C}^{\infty}_c(X)$ stands for the space of smooth compactly supported functions on $X$. In addition, any invariant measure supported on a closed unipotent orbit in $X$ is also $(k,\mathscr{C}^{\infty}_c(X))$-generic for all $k=1,\dotsc,d$ (see for instance \cite[Theorem 1.4]{BFK25}). More generally, any measure-preserving and exponentially mixing action of $\mb R^d$ on a probability space $(X,\mu)$ (under some minor assumptions) enjoys the same property (see \cite[Theorem 2.2]{BFK25}).

In view of the above discussion, it is natural to ask whether there exist ergodic actions of $\mb R^d$ on some probability space $(X,\mu)$ such that the measure $\mu$ is $(k,\mc F)$-generic for some much larger class of functions $\mc F$, e.g., $\mathrm L^{\infty}(X)$.
%\red{ and if there exist $M$ which is entirely contained in a proper affine subspace of dimension $m$ and an ergodic $\real^d$-action on $(X, \mu)$ such that $\mu$ is $(M, \mathcal{F})$-generic.}
As a consequence of Theorem \ref{thm:cont2}, we are able to show the following.

\begin{theorem}
\label{thm:cont3}
Let $(X,\mu)$ be a probability space, equipped with an ergodic and aperiodic measure-preserving action of $\mb R^d$. Let $M$ be a compact $m$-dimensional ($1\leq m\leq d-1$) $\mathscr{C}^1$ submanifold of $\mb R^d$ such that $M\cap \pi$ is a non-empty open set in $M$ for a given $m$-dimensional affine subspace $\pi$ of $\mb R^d$, which does not contain the origin. Then the measure $\mu$ is not $(M,\textup{L}^{\infty}(X))$-generic. In particular, there exist no ergodic and aperiodic measure-preserving actions of $\mb R^d$ on $X$ that are $(m,\textup{L}^{\infty}(X))$-generic for any $m=1,\dotsc,d-1$. 
\end{theorem}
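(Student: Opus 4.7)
The strategy is to apply Theorem~\ref{thm:cont2} to a sequence of $d$-dimensional boxes capturing the open piece $M\cap\pi$, and then to transfer the resulting strong sweeping out property from these $d$-dimensional averages to the $m$-dimensional averages along $tM$ via a Fubini/measure-preservation argument.

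After an orthogonal change of coordinates on $\mb R^d$ (which yields an equivalent ergodic, aperiodic, measure-preserving $\mb R^d$-action and preserves $\vol_m$), I may assume $\pi=\{(\bs s,c,\bs 0_{d-m-1}):\bs s\in\mb R^m\}$ for some $c>0$. Under the isometric parameterization $\iota(\bs s):=(\bs s,c,\bs 0)$, $M\cap\pi$ corresponds to an open set $U_0\subset\mb R^m$, inside which I fix a closed cube $C\subset U_0$ of side $2r>0$; then $\iota(C)\subset M$ has positive $m$-volume $(2r)^m$. For $t_k:=2^k$ I consider the $d$-dimensional boxes
\[B_k:=t_kC\times[t_kc,\,t_kc+1]\times[0,1]^{d-m-1}\subset\mb R^d.\]
In the $(m+1)$-st coordinate the associated sequence is $\{(t_kc,1):k\in\mb N\}$, whose starts $t_kc$ tend to infinity while the lengths remain equal to $1$; hence for every $\alpha>0$ and every $\lambda>1$ the set $\Omega_{m+1}^{(\alpha)}(\lambda)$ contains infinitely many eventually disjoint intervals of length $2\alpha(\lambda-1)$ and therefore has infinite Lebesgue measure, so the failure condition $(!\tilde C_{m+1})$ is trivially satisfied. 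Since the action is aperiodic and, by Remark~\ref{rmk:ergismix}, forms a mixing family commuting with itself, Theorem~\ref{thm:cont2} applies: for any $\epsilon>0$ there is $B_\epsilon\subset X$ with $\mu(B_\epsilon)<\epsilon$ such that the box averages $R_k\chi_{B_\epsilon}(x):=|B_k|^{-1}\!\int_{B_k}\chi_{B_\epsilon}(U_{\bs a}x)\,d\bs a$ satisfy $\limsup_k R_k\chi_{B_\epsilon}(x)=1$ for $\mu$-a.e.\ $x$.

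Commutativity of the flow yields the key identity
\[R_k\chi_{B_\epsilon}(x)=\int_{[0,1]^{d-m}}\tilde I^{C}_{t_k}(U_{(\bs 0,u,\bs v)}x)\,du\,d\bs v,\qquad\tilde I^{C}_t(y):=\frac{1}{(2tr)^m}\int_{tC}\chi_{B_\epsilon}(U_{(\bs s,tc,\bs 0)}y)\,d\bs s.\]
Since $\tilde I^{C}_t\le 1$, a Markov-type inequality forces $\tilde I^{C}_{t_k}(U_{(\bs 0,u,\bs v)}x)\ge 1-\sqrt{\eta}$ on a set of $(u,\bs v)\in[0,1]^{d-m}$ of measure at least $1-\sqrt{\eta}$ whenever $R_k\chi_{B_\epsilon}(x)\ge 1-\eta$. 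Combining this with the strong sweeping out (and Fatou's lemma applied to the subsequence along which $\mu\{x:R_k\chi_{B_\epsilon}(x)\ge 1-\eta\}\to 1$), the set
\[A_\eta:=\limsup_k\bigl\{(x,u,\bs v):\tilde I^{C}_{t_k}(U_{(\bs 0,u,\bs v)}x)\ge 1-\sqrt{\eta}\bigr\}\subset X\times[0,1]^{d-m}\]
has $(\mu\otimes\textup{Leb})$-measure at least $1-\sqrt{\eta}$. Since $x\mapsto U_{(\bs 0,u,\bs v)}x$ is $\mu$-preserving for each fixed $(u,\bs v)$, every slice of $A_\eta$ has the same $\mu$-measure, and Fubini therefore gives $\mu\{y:\tilde I^{C}_{t_k}(y)\ge 1-\sqrt{\eta}\text{ i.o.}\}\ge 1-\sqrt{\eta}$. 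Letting $\eta\to 0$ along a countable sequence and intersecting the resulting full-measure sets yields $\limsup_{t\to\infty}\tilde I^{C}_t(y)=1$ for $\mu$-a.e.\ $y$.

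To conclude, choose $\epsilon<\vol_m(\iota(C))/\vol_m(M)$ at the outset. Since $\chi_{B_\epsilon}\ge 0$ and $\iota(C)\subset M$, we have the pointwise lower bound
\[\frac{1}{t^m\vol_m(M)}\int_{tM}\chi_{B_\epsilon}(U_{\bs a}y)\,d\vol_m(\bs a)\ge\frac{\vol_m(\iota(C))}{\vol_m(M)}\,\tilde I^{C}_t(y),\]
so taking $\limsup$ in $t$ contradicts any convergence of the left-hand side to $\mu(B_\epsilon)<\epsilon$, and $\mu$ is not $(M,\textup{L}^\infty(X))$-generic. The last assertion of the theorem follows by taking, for any $1\le m\le d-1$, $\pi$ to be any $m$-dimensional affine plane not through the origin and $M$ a closed Euclidean $m$-ball inside $\pi$: then $M\cap\pi=M$ is open in $M$ and the first part applies. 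The main technical obstacle is the Fubini/measure-preservation step of the third paragraph, which is essential in order to pass the strong sweeping out property from the transversally averaged $d$-dimensional box averages to the pointwise $m$-dimensional submanifold averages evaluated at the original base point $y$.
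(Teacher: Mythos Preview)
Your overall strategy is sound and close in spirit to the paper's own proof: both exploit a family of boxes that are thin in one direction (so that Condition~$(!\tilde C_i)$ fails) and then transfer the resulting strong sweeping out to the $M$-averages. The paper carries this out slightly differently: it introduces an auxiliary $\mb R^{m+1}$-action via $\bs e_0..x=\bs u.x$, $\bs e_i..x=\bs v_i.x$, applies Theorem~\ref{thm:cont2} to the $(m+1)$-dimensional boxes $[k-1,k)\times[0,k)^m$, and then, for each fixed $x$, uses Fubini to extract a $t_r(x)\in[k_r-1,k_r)$ with large slice average. Your version pads out to the full $\mb R^d$-action with $[0,1]$-factors and then uses a measure-preservation trick to remove the auxiliary $(u,\bs v)$-shift. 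Both work; the paper's route is shorter because it avoids that last transference step.

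There is, however, a genuine gap in the justification of your key step. You invoke ``the subsequence along which $\mu\{x:R_k\chi_{B_\epsilon}(x)\ge 1-\eta\}\to 1$'', but no such subsequence exists: since the flow is measure-preserving, $\int_X R_k\chi_{B_\epsilon}\,\dd\mu=\mu(B_\epsilon)<\epsilon$ for every $k$, and Markov's inequality then forces $\mu\{R_k\chi_{B_\epsilon}\ge 1-\eta\}<\epsilon/(1-\eta)$, which is small, not close to $1$. Strong sweeping out gives $\limsup_k R_k\chi_{B_\epsilon}(x)=1$ for $\mu$-a.e.\ $x$, but the infinitely many good $k$'s depend on $x$ and cannot be synchronized.

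Fortunately your conclusion $(\mu\otimes\textup{Leb})(A_\eta)\ge 1-\sqrt\eta$ is still correct, by a fiberwise argument. For $\mu$-a.e.\ $x$ there are infinitely many $k$ with $R_k\chi_{B_\epsilon}(x)\ge 1-\eta$, hence $\textup{Leb}\{(u,\bs v):\tilde I^C_{t_k}(U_{(\bs 0,u,\bs v)}x)\ge 1-\sqrt\eta\}\ge 1-\sqrt\eta$ for these $k$. Reverse Fatou (on the bounded space $[0,1]^{d-m}$) then yields $\textup{Leb}\bigl((A_\eta)_x\bigr)\ge 1-\sqrt\eta$ for a.e.\ $x$, and integrating in $x$ gives the claim. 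After the slice argument you deduce $\mu(D_\eta)\ge 1-\sqrt\eta$, where $D_\eta=\{y:\limsup_k\tilde I^C_{t_k}(y)\ge 1-\sqrt\eta\}$; note also that your phrase ``intersecting the resulting full-measure sets'' needs the observation that the $D_\eta$ are nested increasing as $\eta$ grows, so each $D_{\eta'}$ satisfies $\mu(D_{\eta'})\ge\mu(D_\eta)\ge 1-\sqrt\eta$ for all $\eta<\eta'$, forcing $\mu(D_{\eta'})=1$. With these corrections the remainder of your argument goes through.
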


\begin{rmk}
\label{rmk:cartisaperiodic}
Note that the left-multiplication action of a Cartan subgroup $A$ of a Lie group $G$ on the quotient $X=G/\Gamma$ is always ergodic and aperiodic (see explanation below), so that both \cite[Theorem 1.3]{BFK25} and Theorem \ref{thm:cont3} apply simultaneously.
%Thus, Theorem \ref{thm:cont3} implies that the $G$-invariant measure $\mu$ on $X$ is not $(m,\textup{L}^{\infty}(X))$-generic for any $m=1,\dotsc,d-1$.
In other words, a point-wise ergodic theorem for dilates of submanifolds $M$ such as those in Theorem \ref{thm:cont3} holds for smooth test functions, but fails as soon as the regularity assumption is dropped (e.g., if the test function is just essentially bounded). It is also interesting to observe that for the same submanifolds the convergence in \eqref{eq:conv} still occurs in the $\textup{L}^p$-norm for any $p\geq 1$, i.e., a mean ergodic theorem holds. This follows easily from the fact that the measure $\mu$ is $(m,\mathscr{C}^{\infty}_c(X))$-generic, by a density argument.

Let us briefly explain why the left-multiplication action of a Cartan subgroup $A$ on $G$ is aperiodic. If $ag\Gamma=g\Gamma$ for some $a\in A$ and $g\in G$, then there exists $\gamma\in\Gamma$ such that $g^{-1}ag=\gamma$. Put $S(A,\gamma):=\{g\in G:g\gamma g^{-1}\in A\}$ and observe that any periodic point $g\Gamma$ for the group $A$ belongs to the set
$$\bigcup_{\gamma\in\Gamma}S(A,\gamma)\Gamma.$$
This is a countable union (since $\Gamma$ is a lattice in $G$) of algebraic varieties and therefore has measure $0$.
\end{rmk}

We conclude with the following observation. According to \cite[Theorem 2]{BF09}, if $M$ is a polynomial curve in $\real^d$ not entirely contained in a proper affine subspace,
%(with a slight modification to account for the density induced by the measure $\textup{vol}_m$)
the convergence in \eqref{eq:conv} occurs in norm (more precisely, in the $\textup{L}^2$ norm). As suggested in \cite{BF09}, it would be natural to study the point-wise convergence in \eqref{eq:conv} for such curves. However, polynomial curves fall outside the scope of Theorem \ref{thm:cont3}, since any analytic submanifold $M$ of $\mb R^d$ cannot intersect an affine subspace in an open set, unless it is entirely contained in the subspace.

\subsection{Acknowledgments}

The authors are indebted to Alexander Gorodnik for suggesting to read \cite{BJR90}, to Dmitry Kleinbock for pointing out Remark \ref{rmk:cartisaperiodic}, and to Amos Nevo for many fruitful conversations which eventually led to this paper.

\section{Proof of Theorem \ref{thm:cont1} and of Corollary \ref{cor:cont1}}

\subsection{Proof of Theorem \ref{thm:cont1}}

First, we observe that, if a box $B_k$ contains both vectors whose $i$-th component is negative and vectors whose $i$-th component is positive for some $k$ and $i$, then it must be $w_{ki}<0$ and $s_{ki}>|w_{ki}|$. In this case, we may write
$$B_{k}=B_k^{i-}\cup B_k^{i+},$$
where
$$B_k^{i-}:=B_k\cap\{x_i<0\}\quad\mbox{and}\quad B_{k}^{i+}=B_k\cap \{x_i\geq 0\}.$$
If $\mc B'$ is the collection of boxes where $B_k$ is replaced by $B_k^{i-}$ and $B_k^{i+}$, it is obvious that ${N_{\mc B}}f\leq N_{\mc B'}f$ for any $f\in \textup{L}^p(X)$. Moreover, for any $k$ and $i$ such that $s_{ki}>|w_{ki}|$ and any $\lambda >0$ we have that
$$\mathrm{Leb}(\{x\in \mb R:|x-w_{ki}|\leq \alpha(\lambda-s_{ki})\})\leq (1+\alpha)\lambda,$$
so that the boxes $B_k$ that are broken in two or more parts do not contribute to the validity of Condition \eqref{eq:allomegacont}. 
Hence, by working in each orthant separately and by replacing $U_{t_i}$ with $U_{t_i}^{-1}$ if necessary, we may always assume that $w_{ki}\geq 0$ for all $k$ and $i$.

Now, the proof for $d>1$ relies on the case $d=1$ (with $s_k>0$). To see this let $U_t^{(i)}:=U_{t\bs e_i}$, where $\bs e_i$ denotes the $i$-th vector of the standard basis in $\mb R^d$. Take $f\in \textup{L}^p(X)$ for fixed $p>1$ and observe that
\begin{align*}
{N_{\mc B}}f(x) & =\sup_{B_k\in\mc B}\frac{1}{s_{k1}\dotsm s_{kd}}\int_{B_k}\left|f\left(U^{(1)}_{t_1}\dotsb U^{(d)}_{t_d}x\right)\right|\,\dd \bs t\\
& \leq \sup_{(w_{k1},s_{k1})\in \Omega_1}\dotsb \sup_{(w_{kd},s_{kd})\in \Omega_d}\frac{1}{s_{k1}\dotsm s_{kd}}\int_{t_1=w_{k1}}^{w_{k1}+s_{k1}}\dotsb \int_{t_d=w_{kd}}^{w_{kd}+s_{kd}}\left|f\left(U^{(1)}_{t_1}\dotsb U^{(d)}_{t_d}x\right)\right|\,\dd\bs t\\
& \leq \sup_{(w_{k1},s_{k1})\in \Omega_1}\frac{1}{s_{k1}}\int_{t_1=w_{k1}}^{w_{k1}+s_{k1}}\left|\ \dotsb\sup_{(w_{kd},s_{kd})\in \Omega_d}\frac{1}{s_{kd}}\int_{t_d=w_{kd}}^{w_{kd}+s_{kd}}\left|f\left(U^{(1)}_{t_1}\dotsb U^{(d)}_{t_d}x\right)\right|\right|\, \dd \bs t\\[2mm]
&=M_{\Omega_1}\circ\dotsb \circ M_{\Omega_d}f(x),
\end{align*}
where
$$M_{\Omega_i}f(x):=\sup_{(w_{ki},s_{ki})\in\Omega_i}\frac{1}{s_{ki}}\int_{t=w_{ki}}^{w_{ki}+s_{ki}}\left|f\left(U^{(i)}_tx\right)\right|\, \dd t$$
for $i=1,\dotsc,d$. Since the functions ${M_{\mc B}}f$ and $M_{\Omega_1}\circ\dotsb\circ M_{\Omega_d}f$ are positive, we deduce that
$$\left\|{M_{\mc B}}f\right\|_p\leq \|M_{\Omega_1}\circ\dotsb\circ M_{\Omega_d}f\|_p.$$
Now, by the case $d=1$, each operator $M_{\Omega_i}$ is of strong type $(p,p)$. Then ${M_{\mc B}}$ is also of strong type $(p,p)$.

For the case $d=1$ we modify the proof of \cite[Theorem 1]{BJR90}. We stress once again that the conclusion for $d=1$ can also be deduced by the arguments in \cite{Ferrando_1995}, however, modifying \cite[Theorem 1]{BJR90} gives a much simpler proof, which we present to keep the paper self-contained. 

Recall that a family of intervals $\mc B=\{[w_k,w_k+s_k)\}$ with $w_k\geq 0$ and $s_k>0$ is given and that this family satisfies Condition \eqref{eq:allomegacont}. Our goal is to show that the operator
$$N_{\mc B}f(x):=\sup_{k}\frac{1}{s_k}\int_{0}^{s_k}f(U_{w_k+t}x)\,\dd t$$
is of strong type $(p,p)$ for any $p >1$. We will in fact show that it is of weak type $(1,1)$ and conclude by a classical interpolation argument.

In what follows, for any $p\geq 1$ and any measure space $Y$ we denote by $\mathscr{L}^p(Y)$ the set of $p$-integrable functions from $Y$ to $\mb R$. For any $f\in \mathscr{L}^1(X)$, $T>0$, $x\in X$, and $t>0$ we also put
$$f_{T,x}(t):=f(U_tx)\cdot\chi_{[-T,T]}(t).$$
Then, clearly, $f_{T,x}\in \mathscr{L}^1(\mb R)$. Denote by $f_{T,x}^{*}$ the one-sided Hardy-Littlewood maximal function associated to $f_{T,x}$, that is
$$f_{T,x}^{*}(\xi):=\sup_{r\neq 0}\frac{1}{|r|}\int_{0}^{r}|f(\xi+\tau)|\,\dd \tau,$$
where the bounds of the integral are to be inverted if $r<0$. Then, by the Hardy-Littlewood maximal inequality \cite[Lemma 1.6.16]{Tao11}, we have that for any $\lambda>0$
\begin{equation}
\label{eq:HL}
\textup{Leb}\left(\xi:f_{T,x}^{*}(\xi)>\lambda\right)\leq\frac{\|f_{T,x}\|_1}{\lambda}=\frac{1}{\lambda}\int_{-T}^{T}\left|f(U_t x)\right|\,\dd t.    
\end{equation}

Let us now fix $\lambda>0$ and a function $f\in \mathscr{L}^1(X)$. Recall that we aim to show that
\begin{equation}
\label{eq:th}
\mu\left(x:{N_{\mc B}}f(x)>\lambda\right)\leq \frac{\textup{const.}}{\lambda}\|f\|_1.    
\end{equation}

First, observe that
$$\left\{x:{N_{\mc B}}f(x)>\lambda\right\}=\bigcup_{k}\{x:R_k|f|(x)>\lambda\}.$$
Let $\varepsilon>0$ and let $K_{\varepsilon}\geq 1$ such that
\begin{equation*}
\mu\left(\bigcup_{k\leq K_{\varepsilon}}\{x:R_k|f|(x)>\lambda\}\right)\\
\geq \mu\left(x:{N_{\mc B}}f(x)>\lambda\right)-\varepsilon.
\end{equation*}
Choose $N_{\varepsilon}$ so large that
$$\frac{\max_{k\leq K_{\varepsilon}} \left\{ w_{k}+s_k\right\}}{N_{\varepsilon}}\leq\varepsilon$$
and set
$$T_{\varepsilon}:=N_{\varepsilon}+\max_{k\leq K_{\varepsilon}} \left\{w_{k}+s_k\right\}.$$
Further, define
$$Y_{\varepsilon}:=\bigcup_{k\leq K_{\varepsilon}}\{(x,t):R_k|f|(U_tx)>\lambda\mbox{ and }|t|<N_{\varepsilon}\}$$
and observe that, by the invariance of $\mu$, we have that
\begin{multline}
\label{eq:yeps}
\mu\otimes\textup{Leb}(Y_{\varepsilon})=\int_{-N_{\varepsilon}}^{N_{\varepsilon}}\int_{X}\chi_{\{\sup_{k\leq K_{\varepsilon}}R_k|f|>\lambda\}}\circ U_t\,\dd\mu\dd t \\
2N_{\varepsilon}\cdot \mu\left(\bigcup_{k\leq K_{\varepsilon}}\{x:R_k|f|(x)>\lambda\}\right)\geq 2N_{\varepsilon}\cdot\mu\left(x:{N_{\mc B}}f(x)>\lambda\right)-2N_{\varepsilon}\cdot \varepsilon.
\end{multline}
Our goal will now be to bound from above the measure of the set $Y_{\varepsilon}$. To do so, we consider sections
$$Y_{\varepsilon}(x):=\{\xi:(x,\xi)\in Y_{\varepsilon}\}$$
for $x\in X$. In the next two lemmas, we obtain a bound for $\textup{Leb}(Y_{\varepsilon}(x))$ in terms of the quantity
$$\textup{Leb}\left(\xi:f_{T,x}^{*}(\xi)>\lambda\right)$$
for fixed $x$. Thanks to \eqref{eq:HL} and \eqref{eq:yeps}, this will easily lead to \eqref{eq:th} (details explained in the sequel).

\begin{lem}
\label{lem:lemma1}
For any $(x,t)\in Y_{\varepsilon}$ there exists a pair $(w_k,s_k)\in\Omega$ with $k\leq K_{\varepsilon}$ such that
\begin{equation}
\label{eq:wholeintervalinside}
[t+w_k,t+w_k+s_k)\subset \left\{\xi:f_{T_{\varepsilon},x}^{*}(\xi)>\lambda\right\}.    
\end{equation}
\end{lem}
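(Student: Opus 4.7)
The plan is to unpack the definition of $Y_\varepsilon$ and then exploit that, at any interior point of an interval whose mean exceeds $\lambda$, the one-sided Hardy--Littlewood maximal function must also exceed $\lambda$. Unpacking $(x,t)\in Y_\varepsilon$, there exists $k\leq K_\varepsilon$ with $R_k|f|(U_t x)>\lambda$ and $|t|<N_\varepsilon$. Using the flow relation $U_\tau U_t=U_{\tau+t}$ and the change of variables $u=\tau+t$, this rewrites as
$$\frac{1}{s_k}\int_{t+w_k}^{t+w_k+s_k}|f(U_u x)|\,\dd u>\lambda.$$
Under the convention (analogous to the reduction made for Theorem \ref{thm:disc1}) that $w_k\geq 0$ for every $k$, the choice $T_\varepsilon=N_\varepsilon+\max_{k\leq K_\varepsilon}\{w_k+s_k\}$ together with $|t|<N_\varepsilon$ forces $[t+w_k,t+w_k+s_k]\subset[-T_\varepsilon,T_\varepsilon]$, so throughout this window $|f(U_u x)|$ and $|f_{T_\varepsilon,x}(u)|$ coincide.

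Next, I would fix $\xi\in[t+w_k,t+w_k+s_k)$ and split the integral at $\xi$:
$$\int_{t+w_k}^{t+w_k+s_k}|f_{T_\varepsilon,x}(u)|\,\dd u=\int_{t+w_k}^{\xi}|f_{T_\varepsilon,x}(u)|\,\dd u+\int_{\xi}^{t+w_k+s_k}|f_{T_\varepsilon,x}(u)|\,\dd u.$$
Dividing by $s_k$ expresses the left-hand average as a convex combination, with positive weights $a=\xi-(t+w_k)$ and $b=(t+w_k+s_k)-\xi$ (for $\xi>t+w_k$), of the two sub-interval averages. Since the combination exceeds $\lambda$, at least one of the two averages does too. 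As $\xi$ is an endpoint of both sub-intervals, taking either $r=-a<0$ or $r=b>0$ in the definition of $f_{T_\varepsilon,x}^{*}$ yields $f_{T_\varepsilon,x}^{*}(\xi)>\lambda$. The boundary case $\xi=t+w_k$ is handled directly by $r=s_k$, whose resulting average is precisely the whole original one that already exceeds $\lambda$.

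I do not expect a substantial obstacle here: the core of the argument is simply the remark that one of the two one-sided truncations of an interval at any interior point inherits the same lower bound on its mean, and that the one-sided maximal function dominates both. The only technicalities are the bookkeeping for the reduction to $w_k\geq 0$ and the verification that $T_\varepsilon$ is large enough to make the cutoff $\chi_{[-T_\varepsilon,T_\varepsilon]}$ in $f_{T_\varepsilon,x}$ invisible on the window $[t+w_k,t+w_k+s_k]$, both of which are purely quantitative and follow from the prior choice of $N_\varepsilon$ and $T_\varepsilon$.
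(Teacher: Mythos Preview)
Your proposal is correct and follows essentially the same approach as the paper: both arguments rest on the observation that if the mean of $|f_{T_\varepsilon,x}|$ over $[t+w_k,t+w_k+s_k)$ exceeds $\lambda$, then at any point $\xi$ in that interval at least one of the two one-sided averages from $\xi$ must exceed $\lambda$. The paper phrases this by contradiction (if $f_{T_\varepsilon,x}^{*}(\xi_0)\leq\lambda$ for some $\xi_0$ in the interval, both one-sided averages are $\leq\lambda$, forcing the full average $\leq\lambda$), whereas you argue directly via a convex combination; these are logically equivalent formulations of the same idea.
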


\begin{proof}
Fix $(x,t)\in Y_{\varepsilon}$. Then there exists $(w_k,s_k)\in\Omega$ with $k\leq K_{\varepsilon}$ such that
\begin{equation}
\label{eq:geqlambda}
\frac{1}{s_k}\int_{0}^{s_k} |f(U_{t+w_k+\xi}x)|\,\dd \xi=\frac{1}{s_k}\int_{0}^{s_k} |f_{T_{\varepsilon},x}(t+w_k+\xi)|\,\dd \xi>\lambda.
\end{equation}
Assume by contradiction that \eqref{eq:wholeintervalinside} fails. Then for some $\xi_0\in [t+w_k,t+w_k+s_k)$ we have that $f_{T_{\varepsilon},x}^{*}(\xi_0)\leq \lambda$. Hence,
\begin{multline*}
\int_{-(\xi_0-(t+w_k))}^{0}|f_{T_{\varepsilon},x}(\xi_0+\tau)|\,\dd \tau\leq \lambda\cdot (\xi_0-(t+w_k))\quad\mbox{and} \\
\int_{0}^{(t+w_k+s_k)-\xi_0}|f_{T_\varepsilon,x}(\xi_0+\tau)|\,\dd\tau\leq \lambda\cdot ((t+w_k+s_k)-\xi_0)
\end{multline*}
and 
$$\int_{t+w_k}^{t+w_k+s_k}|f_{T_{\varepsilon},x}(\tau)|\,\dd\tau\leq\lambda\cdot s_k$$
-- a contradiction to \eqref{eq:geqlambda}.
\end{proof}

Lemma \ref{lem:lemma1} further implies the following.

\begin{lem}
\label{lem:lemma2}
For any $x\in X$ it holds that
$$\textup{Leb}\left(Y_{\varepsilon}(x)\right)\leq AC_{\alpha}\cdot \textup{Leb}\left(\xi:f_{T_{\varepsilon},x}^{*}(\xi)>\lambda\right),$$
where $A$ and $\alpha$ are the constants in Condition \eqref{eq:allomegacont} (for $i=1$) and $C_{\alpha}$ is an absolute constant only depending on $\alpha$.
\end{lem}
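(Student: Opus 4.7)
The plan is to exploit the interval structure of the super-level set $E_{\lambda} := \{\xi \in \mb R : f^{*}_{T_{\varepsilon}, x}(\xi) > \lambda\}$. Because $f^{*}_{T_{\varepsilon}, x}$ is lower semicontinuous (being a supremum of averages that depend continuously on $\xi$), $E_{\lambda}$ is open, and because $f_{T_{\varepsilon}, x}$ is supported on $[-T_{\varepsilon}, T_{\varepsilon}]$, $E_{\lambda}$ is bounded. Hence $E_{\lambda} = \bigsqcup_{j} I_{j}$ with $I_{j} = (a_{j}, b_{j})$ the maximal open subintervals; set $L_{j} := b_{j} - a_{j}$ and $c_{j} := (a_{j} + b_{j})/2$.

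By Lemma \ref{lem:lemma1}, for every $t \in Y_{\varepsilon}(x)$ there is an index $k \leq K_{\varepsilon}$ with $[t + w_{k}, t + w_{k} + s_{k}) \subset E_{\lambda}$, and since this half-open interval is connected while $E_{\lambda}$ is open, it must lie entirely inside a single component $I_{j}$. Define $Y_{j}$ to be the set of those $t \in Y_{\varepsilon}(x)$ for which some such $k$ yields containment in this particular $I_{j}$. The $Y_{j}$ are measurable (as finite unions over $k \leq K_{\varepsilon}$ of explicit measurable sets) and cover $Y_{\varepsilon}(x)$, so by countable subadditivity and the identity $\sum_{j} L_{j} = \textup{Leb}(E_{\lambda})$ it suffices to prove $\textup{Leb}(Y_{j}) \leq A C_{\alpha} L_{j}$ for each $j$.

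The key step will be a geometric containment. For $t \in Y_{j}$ with associated pair $(w, s) := (w_{k}, s_{k})$, the inclusion $[t + w, t + w + s) \subset (a_{j}, b_{j})$ forces $a_{j} - w < t \leq b_{j} - w - s$, which after rearrangement becomes $|c_{j} - t - w| \leq L_{j}/2$. Since necessarily $s \leq L_{j}$, choosing $\lambda_{j} := L_{j}(1 + 1/(2\alpha))$ makes $\alpha(\lambda_{j} - s) \geq L_{j}/2$, so $c_{j} - t$ lies in $\Omega^{(\alpha)}(\lambda_{j})$, and therefore $Y_{j} \subset c_{j} - \Omega^{(\alpha)}(\lambda_{j})$. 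Condition \eqref{eq:allomegacont} then yields $\textup{Leb}(Y_{j}) \leq A \lambda_{j} = A C_{\alpha} L_{j}$ with $C_{\alpha} := 1 + 1/(2\alpha)$, and summing over $j$ concludes the argument.

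The main obstacle I anticipate is tuning the auxiliary parameter $\lambda_{j}$: the interval of possible values of $c_{j} - t - w$ has length $L_{j} - s$ and is offset from $w$ by $s/2$, so a single $\lambda_{j}$ depending only on $L_{j}$ and $\alpha$ (and not on the particular $s$) must be large enough to absorb both effects uniformly. The slack $s < L_{j}$ is what gets packaged into the constant $C_{\alpha}$; once this calibration is in place, the rest is a direct reduction to the cone condition.
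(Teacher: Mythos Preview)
Your argument is correct and in fact cleaner than the paper's. The paper proceeds by first enclosing a set in an auxiliary open set $O_x$, then applying the Vitali covering lemma to $O_x$ to extract a disjoint family of intervals $(\xi_i,\xi_i+r_i)$, and finally carrying out a somewhat intricate geometric case analysis to trap each $[t+w_k,t+w_k+s_k)$ inside an enlarged interval $(\xi_i-21r_i,\xi_i+21r_i)$; this yields $C_\alpha=42\alpha^{-1}+18$. You bypass all of this by observing that $E_\lambda=\{\xi:f^{*}_{T_\varepsilon,x}(\xi)>\lambda\}$ is already an open bounded subset of $\mb R$, hence automatically a disjoint union of open intervals $I_j$, and that each interval produced by Lemma~\ref{lem:lemma1} sits inside a single component by connectedness. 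The centering trick $|c_j-t-w|\le L_j/2$ combined with $s<L_j$ then places $c_j-t$ directly in $\Omega^{(\alpha)}(\lambda_j)$ for $\lambda_j=L_j(1+1/(2\alpha))$, and summing gives the result with the sharper constant $C_\alpha=1+1/(2\alpha)$. Your route avoids the Vitali lemma and the $\delta$-approximation entirely, at the modest cost of needing the lower semicontinuity and boundedness of $E_\lambda$ up front; both are routine, as you note.
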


\begin{proof}
Fix $\delta>0$ and $x\in X$ such that $Y_{\varepsilon}(x)\neq\emptyset$. Note that, by construction $Y_{\varepsilon}(x)\subset(-N_{\varepsilon},N_{\varepsilon})$. Let $O_x\subset (-N_{\varepsilon},N_{\varepsilon})$ be an open set with the following two properties:
\begin{equation}
\label{eq:contup}
O_x\supseteq Y_{\varepsilon}(x)\quad\mbox{and}\quad\mu\left(O_x\setminus Y_{\varepsilon}(x)\right)\leq \delta.    
\end{equation}
For each $\xi\in O_x$ put $r_\xi:=\sup\{r>0:(\xi,\xi+r)\subset O_x\}$ and consider the covering of $O_x$ given by
$$\bigcup_{\xi\in O_x}(\xi,\xi+r_\xi).$$
By the Vitali Covering Lemma (see \cite[Theorem 6.2.1]{Cohn13}), there exists a countable sub-collection of \emph{disjoint} open intervals $(\xi_i,\xi_i+r_i)$ (where $r_i=r_{\xi_i}$ for brevity) such that
$$O_{x}\subseteq\bigcup_i(\xi_i-2r_i,\xi_i+3r_i).$$
Fix $t\in Y_{\varepsilon}(x)$. By Lemma \ref{lem:lemma1}, we may find $k\leq K_{\varepsilon}$ such that
$$[t+w_k,t+w_k+s_k)\subset O_x.$$
Now, if there exists $i$ such that
$$t+w_k+\frac{s_k}{3}<\xi_i<t+w_k+\frac{2s_k}{3},$$
by the definition of $r_\xi$ it must be $r_i\geq s_k/3$. On the other hand, if for all $i$ if holds that
$$\xi_i<t+w_k+\frac{s_k}{3}\quad\mbox{or}\quad \xi_i\geq t+w_k+\frac{2s_k}{3},$$ 
then, there must be $i$ such that $t+w_k+s_k/2\in (\xi_i-2r_i,\xi_i+3r_i)$. Hence,
$3r_i\geq s_k/6$. In any case, we have that there exists $i$ such that
$r_i\geq s_k/18$ and $(\xi_i-2r_i,\xi_i+3r_i)\cap [t+w_k,t+w_k+s_k)\neq\emptyset$. For this $i$ we therefore have that
$$(\xi_i-21r_i,\xi_i+21r_i)\supseteq [t+w_k,t+w_k+s_k).$$ 
Now, note that
$$|(\xi_i-21r_i)-t-w_k|\leq |(\xi_i-21r_i)-(t+w_k)|\leq 42r_i\leq \alpha(C_{\alpha}r_i-s_k)$$
for $C_{\alpha}=42\alpha^{-1}+18$. By definition of $\Omega^{(\alpha)}$, we conclude that
$$\xi_i-21r_i-t\in\Omega^{(\alpha)}(C_{\alpha}r_i).$$
Hence, we have that
$$Y_{\varepsilon}(x)\subset \bigcup_{i}(\xi_i-21 r_i)-\Omega^{(\alpha)}(C_{\alpha}r_i).$$
This shows that
\begin{multline*}
\textup{Leb}\left(Y_{\varepsilon}(x)\right)\leq \sum_{i}\textup{Leb}\left(\Omega^{(\alpha)}(C_{\alpha}r_i)\right)\leq \sum_{i}AC_{\alpha}r_i \\
\leq AC_{\alpha}\cdot \textup{Leb}(O_x)\leq AC_{\alpha}\cdot\left(\textup{Leb}\left(\xi:f_{T,x}^{*}(\xi)>\lambda\right)+\delta\right).\end{multline*}
By letting $\delta\to 0$, we conclude.
\end{proof}

On combining \eqref{eq:HL} and Lemma \ref{lem:lemma2}, we find that for any $x\in X$ it holds that
\begin{equation}
\label{eq:leb}
\textup{Leb}\left(Y_{\varepsilon}(x)\right)\leq AC_{\alpha}\cdot\textup{Leb}\left(\xi:f_{T_{\varepsilon},x}^{*}(\xi)>\lambda\right)\leq \frac{AC_{\alpha}}{\lambda}\cdot\int_{-T_{\varepsilon}}^{T_{\varepsilon}}|f(U_tx)|\,\dd t.    
\end{equation}

We now use transference. From \eqref{eq:yeps} and \eqref{eq:leb}, it follows that
\begin{multline*}
2N_{\varepsilon}\cdot \mu(x:{N_{\mc B}}f(x)>\lambda)- 2N_{\varepsilon}\cdot\varepsilon \\
\leq \mu\otimes\textup{Leb}(Y_{\varepsilon})
=\int_{X}\textup{Leb}(Y_{\varepsilon}(x))\dd\mu(x) \\
\leq \frac{AC_{\alpha}}{\lambda}\int_{X}\int_{-T_{\varepsilon}}^{T_{\varepsilon}}|f(U_tx)|\,\dd t\dd\mu\leq 2T_{\varepsilon}\cdot \frac{AC_{\alpha}}{\lambda}\|f\|_1.    
\end{multline*} 
Dividing both sides by $2N_{\varepsilon}$ (recall that $1\leq T_{\varepsilon}/N_{\varepsilon}\leq 1+\varepsilon$) gives
$$\mu\left(x:{N_{\mc B}}f(x)>\lambda\right)\leq \frac{AC_{\alpha}}{\lambda}\|f\|_1+\left(1+\frac{AC_{\alpha}}{\lambda}\|f\|_1\right)\varepsilon.$$
On letting $\varepsilon\to 0$, we conclude that ${N_{\mc B}}$ is of weak type $(1,1)$. Since ${N_{\mc B}}$ is  bounded from $\mathrm{L}^{\infty}$ to $\mathrm{L}^{\infty}$, by the Marcinkiewicz Interpolation Theorem \cite[Theorem 1.3.2]{Grafakos}, ${N_{\mc B}}$ is of strong type $(p,p)$ for $p >1$.

\subsection{Proof of Corollary \ref{cor:cont1}}

The strategy to prove an ergodic theorem given a maximal inequality is standard (see for example \cite[Section 2.6.5]{EW11}). We therefore only sketch the proof. Let us start with an observation.

\begin{rmk}
\label{rmk:lkitoinfty}
If for some $1\leq i\leq d$ Condition \eqref{eq:allomegacont} is satisfied, then it must be $s_{ki}\to\infty$. In fact, if there is a sequence $k_r$ such that $s_{k_ri}\leq C$ for a given constant $C\geq 1$, then for all integer $\lambda\geq C$ and all $r$ we have that
$$\{(w_{k_r i}+x,\lambda):-\alpha\cdot(\lambda-C)\leq x\leq \alpha\cdot(\lambda-C)\}\subset\Omega_{i}^{(\alpha)}(\lambda),$$
whence $\mathrm{Leb}\left(\Omega_{i}^{(\alpha)}(\lambda)\right)=\infty$.
\end{rmk}

The first step in the proof is the following lemma.

\begin{lem}
\label{lem:ltwo}
Let $f\in \Ltwo(X)$. Then there exists a function $f'\in \Ltwo(X)$ that is invariant under the flow $U_{\bs t}$, such that
$$\|R_kf-f'\|_{2}\to 0.$$
\end{lem}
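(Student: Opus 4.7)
The plan is an orthogonal-decomposition argument modeled on the proof of von Neumann's mean ergodic theorem, adapted to the joint action of the commuting transformations $T_1,\dotsc,T_d$. Let $\mc I\subset\Ltwo(X)$ denote the closed subspace of functions simultaneously invariant under $T_1,\dotsc,T_d$, let $P$ be the orthogonal projection onto $\mc I$, and set $f':=Pf$, so that $f=f'+g$ with $g\in \mc I^\perp$. Since $A_kf'=f'$ for every $k$, the lemma reduces to showing that $\|A_kg\|_2\to 0$ for every $g\in\mc I^\perp$.

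Letting $\mc I_i$ denote the $T_i$-fixed subspace of $\Ltwo(X)$, the one-dimensional mean ergodic theorem gives $\mc I_i^\perp=\overline{(I-T_i)\Ltwo(X)}$, and the Hilbert-space identity $(\bigcap_i \mc I_i)^\perp=\overline{\sum_i \mc I_i^\perp}$ then shows that $\mc I^\perp$ is the $\Ltwo$-closure of the linear span of $\{h-T_ih:h\in \Ltwo(X),\ i=1,\dotsc,d\}$. The heart of the proof is a telescoping estimate on a single coboundary $h-T_ih$: writing $T^{\bs j}:=T_1^{j_1}\cdots T_d^{j_d}$ and using the shift identity $\sum_{\bs j\in B_k}h(T^{\bs j+\bs e_i}x)=\sum_{\bs j\in B_k+\bs e_i}h(T^{\bs j}x)$, the interior terms cancel and one is left with a difference of two sums over the two $i$-faces of $B_k$, each of cardinality $\prod_{j\neq i}l_{kj}$. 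Combining the triangle inequality in $\Ltwo$ with the $T_j$-invariance of $\mu$ then yields
$$\|A_k(h-T_ih)\|_2\leq \frac{2\|h\|_2}{l_{ki}},$$
which tends to $0$ as $k\to\infty$ by Remark \ref{rmk:lkitoinfty}.

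To conclude, given $g\in\mc I^\perp$ and $\varepsilon>0$, I would approximate $g$ in $\Ltwo$ within $\varepsilon$ by a finite sum $G:=\sum_{i=1}^d(h_i-T_ih_i)$. Since $A_k$ is a contraction on $\Ltwo$ (being an average of the isometries $f\mapsto f\circ T^{\bs j}$), the triangle inequality gives
$$\|A_kg\|_2\leq \|A_k(g-G)\|_2+\|A_kG\|_2\leq \varepsilon+\sum_{i=1}^d\|A_k(h_i-T_ih_i)\|_2,$$
and the second term tends to $0$ by the previous step; letting $\varepsilon\to 0$ finishes the proof. The only genuinely non-trivial input is the fact that Condition \eqref{eq:allomega} forces $l_{ki}\to\infty$ in every coordinate $i$, which is precisely the content of Remark \ref{rmk:lkitoinfty}; I do not expect any serious obstacle beyond a careful bookkeeping of the face cancellations in the telescoping step.
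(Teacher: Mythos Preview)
Your proposal is correct and follows essentially the same approach as the paper's sketch: decompose $\Ltwo(X)$ orthogonally into the joint invariant subspace and its complement, identify the complement as the closure of the span of coboundaries $h-h\circ T_i$ via the Hilbert-space identity $(\bigcap_i \mc I_i)^\perp=\overline{\sum_i \mc I_i^\perp}$, and use $l_{ki}\to\infty$ (Remark \ref{rmk:lkitoinfty}) to kill $\|A_k(h-h\circ T_i)\|_2$. Your write-up in fact supplies the explicit telescoping bound $\|A_k(h-T_ih)\|_2\leq 2\|h\|_2/l_{ki}$ and the density/contraction argument that the paper leaves to the reference \cite[Theorem 2.21]{EW11}, so there is nothing to correct.
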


\begin{proof}[Sketch of Proof]
For $i=1,\dotsc,d$ define $\mc U_{U_{t_i}}f:=f\circ U_{t_i}$ and let
$$I:=\{g\in \Ltwo(X):\mc U_{U_{t_i}}g=g\mbox{ for }i=1,\dotsc,d\}.$$    
Let $B:=I^{\perp}$. Then
$$B=\overline{B_{1}\oplus\dotsb\oplus B_d},$$
where
$$B_{i}:=\{\mc U_{U_{t_i}}g-g:g\in\Ltwo(X)\}.$$
Since $s_{ki}\to\infty$ for $i=1,\dotsc,d$ (see Remark \ref{rmk:lkitoinfty}) it is clear that for any $h\in B_i$ we have that $\|R_kh\|_{2}\to 0$ as $k\to\infty$. Then the conclusion follows as in \cite[Theorem 2.21]{EW11}.
\end{proof}

Let $f\in \textup{L}^{\infty}(X)$. By Lemma \ref{lem:ltwo}, we know that $R_kf$ converges to an invariant function $f'$ in $\Ltwo(X)$. Now, for every measurable $B\subset X$ we have that
$$\langle R_{k}f,\chi_{B}\rangle\leq \|f\|_{\infty}\cdot \mu(B).$$
Hence, the same must be true if $R_kf$ is replaced by $f'$. This shows that $f'\in \textup{L}^{\infty}(X)$. 

We now need the following.

\begin{lem}
\label{lem:comp}
For any $k\geq h$ and any $f\in\textup{L}^{\infty}(X)$ we have that
$$R_{k}\circ R_hf=R_k f+O_{\bs w_h,\bs s_h}\left((s_{k1}\dotsm s_{kd})^{-1}\|f\|_{\infty}\right).$$
\end{lem}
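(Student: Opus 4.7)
The plan is to exploit the commutativity of the transformations $T_1,\dotsc,T_d$ to rewrite $A_k\circ A_h f$ as an average of translates of $A_k f$, and then estimate the result by a boundary/telescoping argument. Throughout, write $T^{\bs j}:=T_1^{j_1}\dotsm T_d^{j_d}$ and $|B_k|:=l_{k1}\dotsm l_{kd}$.

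First, since the $T_i$'s commute, the operators $A_k$ and $A_h$ also commute as convolution operators on the abelian group they generate; in particular,
\[
A_k(A_h f)(x)-A_k f(x)=\frac{1}{|B_h|}\sum_{\bs i\in B_h\cap\mb Z^d}\bigl(A_k f(T^{\bs i}x)-A_k f(x)\bigr).
\]
For each fixed $\bs i\in B_h$, absorbing the shift $T^{\bs i}$ into the summation index reduces the inner difference to a sum over the symmetric difference of two lattice boxes:
\[
A_k f(T^{\bs i}x)-A_k f(x)=\frac{1}{|B_k|}\left(\sum_{\bs j\in(B_k+\bs i)\cap\mb Z^d}f(T^{\bs j}x)-\sum_{\bs j\in B_k\cap\mb Z^d}f(T^{\bs j}x)\right),
\]
which is bounded in absolute value by $\|f\|_\infty\cdot|(B_k+\bs i)\triangle B_k\cap\mb Z^d|/|B_k|$. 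Using the product structure of the boxes, one counts
\[
|(B_k+\bs i)\triangle B_k\cap\mb Z^d|\leq 2\Bigl(|B_k|-\prod_r(l_{kr}-|i_r|)_+\Bigr),
\]
so that this symmetric difference lies inside boundary shells of thickness $|i_r|$ along the $r$-th axis.

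Finally, for $\bs i\in B_h$ one has $|i_r|\leq|n_{hr}|+l_{hr}$, a quantity depending only on $\bs n_h,\bs l_h$. Averaging the boundary estimate over $\bs i\in B_h$ and dividing by $|B_h|$ yields a bound of the form $C_{\bs n_h,\bs l_h}\,\varphi(\bs l_k)\|f\|_\infty$ with $\varphi(\bs l_k)\to 0$ as $\min_r l_{kr}\to\infty$, which is all that will be needed in the proof of Corollary \ref{cor:disc1} (since in view of Remark \ref{rmk:lkitoinfty} one has $l_{kr}\to\infty$ for each $r$). The main obstacle is sharpening the rate $\varphi(\bs l_k)$: a direct boundary count only delivers $\varphi(\bs l_k)=O(\max_r l_{kr}^{-1})$, so obtaining the factor $(l_{k1}\dotsm l_{kd})^{-1}$ as in the statement requires a more careful telescoping. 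A natural way to implement this is to iterate the one-dimensional case: writing $A_k=A_{k,1}\dotsm A_{k,d}$ and $A_h=A_{h,1}\dotsm A_{h,d}$, where $A_{k,r}$ is the one-dimensional average along $T_r$, one peels off each factor in turn using the $d=1$ version of the lemma, absorbing one factor of $l_{kr}^{-1}$ at each stage by the multiplicative normalization.
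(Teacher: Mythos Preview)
Your symmetric-difference argument is precisely what the paper does, only with different bookkeeping: the paper expands $A_k\circ A_h f$, swaps the two sums, shifts the inner index by $\bs n_h+\bs j$, and recognizes the result as an average over $\bs j\in B_h$ of $A_kf$ evaluated at $T^{\bs n_h+\bs j}x$. The error term is then exactly the discrepancy between shifted and unshifted box averages, i.e.\ your $\lvert (B_k+\bs i)\triangle B_k\cap\mb Z^d\rvert/|B_k|$ bound.

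You are right to flag the rate. For a shift $\bs i$ with $|i_r|\le C(\bs n_h,\bs l_h)$ one has
\[
\bigl|(B_k+\bs i)\triangle B_k\cap\mb Z^d\bigr|\le 2\sum_{r=1}^d |i_r|\prod_{s\neq r}l_{ks},
\]
so after dividing by $|B_k|$ the error is $O_{\bs n_h,\bs l_h}\bigl(\sum_r l_{kr}^{-1}\bigr)\|f\|_\infty$, not $O\bigl(|B_k|^{-1}\bigr)\|f\|_\infty$. The paper's own computation delivers the same thing; the product $(l_{k1}\dotsm l_{kd})^{-1}$ in the stated lemma appears to be a slip for $d\ge 2$ (one can build counterexamples in a Rokhlin tower showing the sharper rate is false). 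This is harmless for the application, since the proof of Corollary~\ref{cor:disc1} only uses $A_k\circ A_h f-A_k f\to 0$ pointwise, and $l_{kr}\to\infty$ for every $r$ by Remark~\ref{rmk:lkitoinfty}.

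Your proposed repair --- peeling off coordinates one at a time via the $d=1$ case --- does not recover the product rate either. At each step you pick up an \emph{additive} error $O(l_{kr}^{-1}\|f\|_\infty)$ (since the one-dimensional averaging operators are contractions on $\textup{L}^\infty$), and these errors sum rather than multiply, yielding again $O\bigl(\sum_r l_{kr}^{-1}\bigr)$. So stop at the weaker bound: it is what both your argument and the paper's actually prove, and it is all that is needed downstream.
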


\begin{proof}
In this proof, we once again denote by $U_{t}^{(i)}$ the one-dimensional flow $U_{t\bs e_i}$ for $i=1,\dotsc,d$. We have that
\begin{align*}
R_{k}\circ R_hf & =\frac{1}{s_{k1}\dotsm s_{kd}}\int_{t_1'=0}^{t_1'=s_{k1}}\dotsb \int_{t_d'=0}^{t_d'=s_{kd}} \\
& \hspace{0.5cm}\frac{1}{s_{h1}\dotsm s_{hd}}\int_{t_1=0}^{t_1=s_{h1}}\dotsb \int_{t_d=0}^{t_d=s_{hd}}f\left(U^{(1)}_{w_{h1}+w_{k1}+t_1+t_1'}\dotsb U^{(d)}_{w_{hd}+w_{kd}+t_d+t_d'}x\right)\,\dd \bs t\,\dd\bs t' \\
& = \frac{1}{s_{h1}\dotsm s_{hd}}\int_{t_1=0}^{t_1=s_{h1}}\dotsb \int_{t_d=0}^{t_d=s_{hd}} \\
& \hspace{0.5cm}\frac{1}{s_{k1}\dotsm s_{kd}}\int_{t_1'=w_{h1}+t_1}^{t'_1=s_{k1}+w_{h1}+t_1}\dotsb \int_{t_d'=w_{hd}+t_d}^{t_d'=s_{kd}+w_{hd}+t_d}f\left(U^{(1)}_{w_{k1}+t_1'}\dotsb U^{(d)}_{w_{kd}+t_d'}x\right)\,\dd \bs t'\,\dd\bs t \\[2mm]
& = R_{k}f +O_{\bs w_h,\bs s_h}\left((s_{k1}\dotsm s_{kd})^{-1}\|f\|_{\infty}\right),
\end{align*} as desired.
\end{proof}

Let us show point-wise convergence in $\textup{L}^{\infty}(X)$. Let $f\in \textup{L}^{\infty}(X)$ and let $f'$ be the function found in Lemma \ref{lem:ltwo}. Note that for any $k$ it holds that $R_kf'=f'$, since $f'$ is invariant. Fix $\varepsilon,\delta>0$ and pick $h$ so large that $\|R_hf-f'\|_2\leq \delta$. Then, by Lemma \ref{lem:comp}, Remark \ref{rmk:lkitoinfty}, and Theorem \ref{thm:cont1}, we have that
\begin{multline*}
\mu\left(x:\limsup_{k}|R_{k}f-f'|>\varepsilon\right)=\mu\left(x:\limsup_{k}|R_{k}\circ R_h f-R_k f'|>\varepsilon\right) \\
\leq \mu\left(x:{N_{\mc B}}(R_hf-f')>\varepsilon\right)\ll \|R_hf-f'\|_2\leq \delta.
\end{multline*}
This gives point-wise convergence. Finally, since $\textup{L}^{\infty}(X)$ is dense in $\textup{L}^p(X)$ for any $p>1$, we may use Theorem \ref{thm:cont1} and a density argument to conclude. 

\section{Proof of Theorem \ref{thm:cont2}}

In this section we show that, provided Condition \eqref{eq:ctslinearfail} holds for some $1\leq i\leq d$, the operators  $R_k$ enjoy the "strong sweeping out" property. The proof is similar to that of \cite{JONES1991127}[Theorem 3.2] and is a direct application of \cite[Theorem 3]{BJR90}, which we recall below for the convenience of the reader.

\begin{theorem}{\cite[Theorem 3]{BJR90}}
\label{thm:3}
    Let $(X, \Sigma, \mu)$ be a probability space and let $\{T_k\}$ be a sequence of linear operators on $\Lone(X)$ satisfying the following properties:
    \begin{itemize}
        \item $T_k \geq 0$;\vspace{2mm} 
        \item $T_k1=1$;\vspace{2mm}
        \item all $T_k$ commute with a mixing family of measure-preserving transformations $\{S_{h}\}$ on $X$.
    \end{itemize}
    For $n \in \mb N$ let $M_n f:=\sup_{k \geqslant n}|T_kf|$ and assume that for each $\varepsilon>0$ and $n \in \mb N$ there exists a sequence of sets $\{H_p\}$ in $X$ such that \begin{equation}\label{eq:star}\tag{$\star$}
    \sup_p \frac{\mu(M_n\chi_{H_p}>1- \varepsilon)}{\mu(H_p)}= \infty.
\end{equation} Then the sequence $T_k$ has the "strong sweeping out" property (see Theorem \ref{thm:cont2}).
\end{theorem}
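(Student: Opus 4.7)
The plan is to prove this abstract implication via the classical Sawyer-style covering argument. The three bulleted hypotheses are used precisely to transport the divergence witness provided by \eqref{eq:star} from a single, possibly tiny, set $H_p$ to a single set $B_\varepsilon \subset X$ of prescribed small measure which sweeps out all of $X$ under the maximal operator.

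The first step is to upgrade \eqref{eq:star} to a one-shot statement: for each $\delta > 0$ and $n \in \mb N$, produce a measurable $B \subset X$ with $\mu(B) < \delta$ and $\mu(M_n \chi_B > 1 - \delta) > 1 - \delta$. To this end, fix $H = H_p$ satisfying \eqref{eq:star} with the ratio $R := \mu(M_n \chi_H > 1 - \delta)/\mu(H)$ as large as desired. I would then build finitely many translates $S_{h_1}^{-1}H, \dots, S_{h_L}^{-1}H$ by applying \eqref{eq:Sawmix} inductively: having chosen $h_1, \dots, h_{j-1}$, pick $h_j$ so that $S_{h_j}^{-1}H$ is quasi-independent of the previously constructed union and, simultaneously, the corresponding translate of $\{M_n \chi_H > 1 - \delta\}$ is quasi-independent of its running union. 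The commutation $T_k \circ S_h = S_h \circ T_k$ transfers the maximal bound, giving $\{M_n \chi_{S_{h_i}^{-1}H} > 1-\delta\} = S_{h_i}^{-1}\{M_n \chi_H > 1-\delta\}$. Setting $B := \bigcup_i S_{h_i}^{-1}H$ and balancing $L \sim \delta/\mu(H)$ with $R$ chosen so that $(1 - R\mu(H))^L < \delta$ yields the desired $B$.

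The second step combines these one-shot sets into a single $B_\varepsilon$ via Borel-Cantelli. Apply the first step for each $m \in \mb N$ with parameters $\delta_m = \varepsilon 2^{-m}$ and a strictly increasing sequence of lower indices $n_m \to \infty$, producing sets $B_m$ with $\mu(B_m) < \varepsilon 2^{-m}$ and $\mu(\sup_{k \geq n_m} T_k \chi_{B_m} > 1 - 1/m) > 1 - 1/m^2$. Set $B_\varepsilon := \bigcup_m B_m$, so $\mu(B_\varepsilon) < \varepsilon$. The positivity of $T_k$ gives $\sup_{k \geq n_m} T_k\chi_{B_\varepsilon} \geq \sup_{k \geq n_m} T_k \chi_{B_m}$, and Borel-Cantelli yields $\limsup_k T_k \chi_{B_\varepsilon} = 1$ $\mu$-a.e. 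The dual conclusion $\liminf_k T_k \chi_{B_\varepsilon} = 0$ a.e., which via $T_k 1 = 1$ is equivalent to $\limsup_k T_k \chi_{X \setminus B_\varepsilon} = 1$ a.e., is obtained by running a parallel covering argument ensuring that, at infinitely many indices, the maximal function of $\chi_{X \setminus B_\varepsilon}$ also approaches $1$.

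The main obstacle I expect is the inductive choice of the translations $h_j$ in the covering step: a single $h_j$ must satisfy two near-independence conditions simultaneously, and each invocation of \eqref{eq:Sawmix} only controls one pair of test sets at a time. Forcing both conditions at once — so that the union $B$ has measure controlled by $L\mu(H)$ while $\{M_n \chi_B > 1-\delta\}$ still has measure arbitrarily close to one — requires a careful numerical balancing of the parameters $L$, $R$, $\delta$, and $\rho$ in \eqref{eq:Sawmix}, which is the delicate combinatorial heart of Sawyer's original covering technique \cite{Sawyer} adapted to the setting of \cite{BJR90}.
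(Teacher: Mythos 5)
First, note that the paper does not prove this statement at all: it is imported verbatim as \cite[Theorem 3]{BJR90}, so your proposal is really an attempt to reconstruct the proof from the cited source rather than something to compare with an in-paper argument. Your first two steps are essentially the standard Sawyer-type argument and are sound. In fact the ``main obstacle'' you flag is not an obstacle: you only need quasi-independence, via \eqref{eq:Sawmix}, of the translated good sets $S_{h_j}^{-1}\{M_n\chi_H>1-\delta\}$ against their running union, since the commutation gives $M_n\chi_{S_h^{-1}H}=(M_n\chi_H)\circ S_h$ and positivity gives $M_n\chi_B\geq M_n\chi_{S_{h_j}^{-1}H}$; the measure of $B=\bigcup_j S_{h_j}^{-1}H$ is controlled by subadditivity alone (each translate has measure $\mu(H)$), so no second near-independence condition is needed. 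Likewise the Borel--Cantelli assembly of the $B_m$'s correctly yields $\limsup_k T_k\chi_{B_\varepsilon}=1$ a.e.

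The genuine gap is the $\liminf$ half. Saying it is ``obtained by running a parallel covering argument'' for $\chi_{X\setminus B_\varepsilon}$ does not work as stated: the covering argument of your first step produces \emph{sets of your choosing} (unions of translates of the $H_p$), whereas $X\setminus B_\varepsilon$ is a fixed set of measure close to $1$ determined by the construction already made, and hypothesis \eqref{eq:star} says nothing about it. Translating by the mixing family cannot help for a fixed set $E$, since $T_k\chi_{S_h^{-1}E}=(T_k\chi_E)\circ S_h$ has the same distribution as $T_k\chi_E$; and since no maximal inequality is available (that is the whole point of the theorem), you also cannot simply discard a small-measure piece of a step-one set to force it into $X\setminus B_\varepsilon$, because $M_n$ of a set of tiny measure need not be small anywhere. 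A correct treatment needs an actual mechanism for producing infinitely many $k$ with $T_k\chi_{B_\varepsilon}$ near $0$ a.e. --- for instance, interleaving a second family of small sets $C_m$, kept disjoint from the pieces of $B_\varepsilon$, with $\limsup_k T_k\chi_{\cup_m C_m}=1$ a.e., and then using $T_k1=1$ and positivity to convert $T_k\chi_{C}$ near $1$ into $T_k\chi_{B_\varepsilon}$ near $0$; but arranging that disjointness (or a category argument on the measure algebra that delivers both the $\limsup$ and $\liminf$ conditions simultaneously) is precisely the part of \cite{BJR90} that your sketch leaves unaddressed.
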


When $T_k=R_k$, the first two assumptions in Theorem \ref{thm:3} are trivially true, while the third one follows by our hypothesis in Theorem \ref{thm:cont2}. Hence, it is enough for us to verify that \eqref{eq:star} holds. Note also that, when $f$ is the characteristic function of a set in $X$, $M_nf$ coincides with  $N_{\mc B'}f$, where $\mc B'=\{B_k\in\mc B:k\geq n\}$.

We first need a Rokhlin tower-type construction for aperiodic flows proved in \cite{Lind}, which we recall below.
    \begin{theorem}{\cite[Theorem 1]{Lind}}
    \label{thm:contRokh}
        Let $U_{\bs t}=U_{t_1, \ldots, t_d}$ be a $d$-dimensional measure-preserving aperiodic flow on $(X, \mu)$. Let $L_1, \ldots, L_d,\delta>0$ and let $Q=Q_L:=[0, L_1) \times \ldots \times [0, L_d)$. Then there exists a set $B \subset X$ with the following properties:
    \begin{itemize}
        \item the sets $U_{\bs t }B$ for $\bs t \in Q$ are pairwise disjoint;\vspace{2mm} 
        \item the set $Y:= \bigcup_{\bs t \in Q} U_{\bs t} B$ is measurable and $\mu (Y)> 1- \delta$;\vspace{2mm}
        \item there exists a measure $\nu _B$ defined on $B$ such that the map $\varphi: B \times Q \to X$ given by $\varphi(x,\bs t):= U_{\bs t}x$ is bijective and both $\varphi$ and its inverse are measurable and measure-preserving with respect to the measures $\nu_B\otimes\textup{Leb}$ on $B\times Q$ and $\mu$ on $X$. 
    \end{itemize}
    \end{theorem}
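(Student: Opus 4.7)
The plan is to build $B$ in two stages. First, using aperiodicity, produce a ``small flow box'': a positive-measure set $B_0$ on which the continuous orbit map $\varphi_0: B_0 \times R \to X$, $(x, \bs t) \mapsto U_{\bs t}x$, is injective, where $R := \prod_i [0, \eta_i)$ with $\eta_i := L_i/N_i$ chosen small by taking $N_i$ large. Second, apply the Katznelson--Weiss discrete Rokhlin lemma \cite{KW72} (the same tool used in the proof of Theorem~\ref{thm:disc2}) to the $\Z^d$-action $V_{\bs k} := U_{k_1\eta_1 \bs e_1 + \dotsb + k_d\eta_d \bs e_d}$, which inherits aperiodicity from the flow, with F\o lner rectangle $F := \prod_i [0, N_i)$ and tolerance $\delta$: this yields a subset $B \subset B_0$ whose $V$-translates $\{V_{\bs k} B\}_{\bs k \in F}$ are pairwise disjoint and whose union $Y$ has $\mu(Y) > 1 - \delta$. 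Injectivity of $\varphi: B \times Q \to X$ then follows by combining the disjointness of the lattice translates with the continuous injectivity on each tower level provided by the first step.

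For the small flow box, the key observation is that by aperiodicity, for each fixed nonzero $\bs t$ the set $\{x : U_{\bs t} x = x\}$ is $\mu$-null, so by Fubini the set of pairs $(x, \bs t) \in X \times (R - R)$ with $\bs t \neq \bs 0$ and $U_{\bs t} x = x$ has measure zero. A Kuratowski-style measurable-selection argument -- iteratively trimming the ``collision set'' $\{(x, \bs t) \in B_0 \times (R - R) : U_{\bs t}x \in B_0, \bs t \neq \bs 0\}$ from a suitable starting candidate -- then produces the desired $B_0$ of positive measure. Once $B$ is in hand, define the measure
$$\nu_B(A) := \frac{1}{L_1\dotsm L_d}\mu\bigl(\varphi(A \times Q)\bigr)$$
for measurable $A \subset B$. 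Flow-invariance of $\mu$ together with Fubini then give that $\varphi$ is a measure-preserving bijection between $(B \times Q, \nu_B \otimes \textup{Leb})$ and $(Y, \mu|_Y)$, verifying all three conclusions.

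The principal technical difficulty is the first step: securing injectivity of the \emph{continuous} $R$-translation family on a positive-measure base, as opposed to merely the $\Z^d$-lattice injectivity handled by the discrete Rokhlin lemma. For $d = 1$ this reduces essentially to the Ambrose--Kakutani cross-section theorem; for $d \geq 2$, the measurable-selection argument becomes substantially more subtle because the orbit structure is genuinely higher-dimensional and the set of ``forbidden'' translations $R - R$ is a full-dimensional open neighborhood of $\bs 0$ rather than an interval. A secondary technical point is that the intersection $B \subset B_0$ in the second step must be engineered so that the Katznelson--Weiss base can be chosen inside the prescribed set $B_0$ without losing the measure bound; this is accomplished by first making $\mu(B_0)$ close to $(N_1\dotsm N_d)^{-1}$ via an exhaustion argument before applying the discrete Rokhlin lemma.
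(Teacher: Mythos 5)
You are attempting to prove a result that the paper does not prove at all: it quotes it verbatim from \cite[Theorem 1]{Lind}, so there is no internal argument to compare with, and your proposal has to stand on its own. It does not, because the combination step is a genuine gap. Disjointness of the sets $U_{\bs t}B$, $\bs t\in Q$, is (modulo null sets, using aperiodicity) the statement that $B\cap U_{\bs w}B=\emptyset$ for every $\bs w\in(Q-Q)\setminus\{\bs 0\}$. Writing $\bs w=\bs k\eta+\bs r$ with $\bs k\in\Z^d$ and $\bs r\in\prod_i[0,\eta_i)$, your flow box handles the case $\bs k=\bs 0$ and the Katznelson--Weiss tower handles the case $\bs r=\bs 0$, but the mixed case, where both parts are nonzero, is controlled by neither ingredient: the discrete lemma only excludes returns of $B$ to itself at exact lattice times, and the flow-box property of $B_0$ only excludes returns at times in $R-R$, so nothing prevents a point of $B$ from returning to $B$ at time, say, $(3\eta_1/2,0,\dotsc,0)$. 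What your scheme would need is the thickened disjointness $B\cap U_{\bs r}V_{\bs k}B=\emptyset$ for all nonzero pairs $(\bs k,\bs r)$, i.e.\ a base whose continuous return times avoid an entire neighborhood of each nonzero lattice point of $Q-Q$; securing such a base is essentially the content of Lind's theorem itself, not a corollary of the two tools you assemble. (In $d=1$ this is exactly what the Ambrose--Kakutani cross-section with return times bounded below provides; for $d\geq 2$ there is no ``flow under a function'' representation, and constructing the tower directly for the flow is where the real work of \cite{Lind} lies.)

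There are two further unsupported claims. First, Katznelson--Weiss gives no control over where the base of the discrete tower sits, and your ``exhaustion argument'' for forcing the base inside $B_0$ while keeping $\mu(Y)>1-\delta$ is not an argument: since the flow is only assumed aperiodic, not ergodic, a flow box $B_0$ may entirely miss a $U$-invariant set of positive measure, and then no tower with base contained in $B_0$ can cover $1-\delta$ of $X$; at minimum you would need $B_0$ to be a complete section, and arranging completeness simultaneously with injectivity of $\varphi_0$ is again nontrivial. Second, the third bullet of the statement (the product form $\nu_B\otimes\textup{Leb}$ of $\varphi_*^{-1}\mu$) does not follow from ``flow-invariance plus Fubini'': $Q$ is not a group, the invariance available is only under partial translations, and one needs a disintegration argument to see that the conditional measures in the $Q$-direction are Lebesgue. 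This last point is standard and repairable; the mixed-time collisions and the placement of the base are not, and they are precisely why the continuous Rokhlin lemma is cited from \cite{Lind} rather than deduced from the discrete one.
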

    
    In particular, the last part of Theorem \ref{thm:contRokh} implies that for any $f\in\Lone(X)$ we have that
    \begin{equation}\label{eq:part3}
        \int_Y f \, \dd \mu = \int_B\int_{\bs t \in Q} f(U_{\bs t} x) \, \dd \bs t \dd \nu_B (x).
    \end{equation} 
    
    Take $\alpha=1$ and suppose that \eqref{eq:ctslinearfail} holds for $i=1$. Fix $p \in \mb N$ and choose a real number $\lambda_p>0$ so that $\mathrm{Leb}\left(\Omega_1^{(1)}(\lambda_p)\right)\geq p\cdot 4\lambda_p$. Then for each $z \in \Omega_1^{(1)}(\lambda_p)$ we have that
    \begin{equation}
    \label{eq:inconecont}
    |z-w_{k1}| \leq \lambda_p-s_{k1}    
    \end{equation}
    for some $(w_{k1}, s_{k1}) \in \Omega_1$. Once again, this implies that
    $$\Omega_1^{(1)}(\lambda_p)=\bigcup_{k}C_{k}^{(1)}(\lambda_p),$$
    where
    $$C_{k}^{(1)}(\lambda_p):=\{x\in \mb R:|x-w_{k1}|\leq(\lambda_p-s_{k1})\}.$$
    Choose $K_p\geq 1$ large enough so that
    $$\textup{Leb}\left(\bigcup_{k\leq K_p}C_k^{(1)}(\lambda_p)\right)\geq p\cdot 4\lambda_p$$
    and put
    $$\Delta:=\bigcup_{k\leq K_p}C_k^{(1)}(\lambda_p).$$
    Then for each $z\in \Delta$ there exists $k\leq K_{p}$ such that \eqref{eq:inconecont} holds. Note that it must be $\lambda_p \geq s_{k1}$ so that for all $0\leq t_1<s_{k1}$ we have that
    \begin{equation}\label{eq:existswksk}
        |-z+w_{k1}+t_1|\leq 2 \lambda_p.
    \end{equation}
    Let
    $$L_1 := 2\lambda_p+\sup\Delta$$
    and
    $$L_j := \sup_{k\leq K_p} |w_{kj}+s_{kj}|$$
    for $j=2, \ldots, d$. Form a tower as in Theorem \ref{thm:contRokh} with parameters $L_1, 3L_2, \dotsc,3L_d$.
    Define
    $$Q_p:=[L_1-4 \lambda_p, L_1)\times[0, 3L_2)\times\dotsm\times[0,3L_d)\vspace{2mm}$$
    and
    $H_p:= \{U_{\bs t} x : x \in B \text{ and }\bs t \in Q_p\}$, so that
    $$\chi_{H_p}(y)= \begin{cases}
        \chi_{Q_p} (\bs t)  &  \text{if }y= U_{\bs t} x \text{ with } x\in B \text{ and }\bs t \in Q. \\
        0 & \text{otherwise.}
    \end{cases}$$ By \eqref{eq:part3}, we deduce that
    \begin{multline}\label{eq:H_p}
        \mu(H_p) = \int_{Y} \chi_{H_p} (y) \, \dd \mu =\int_B\int_{\bs t \in Q} \chi_{H_p}(U_{\bs t}x) \, \dd \bs t \dd \nu_B(x) \\
        = \int_B\int_{\bs t \in Q} \chi_{Q_p} (\bs t) \, \dd \bs t \dd \nu_B(x) = \frac{\mathrm{Leb}(Q_p)}{L_1\cdot 3L_2\dotsm 3L_d} \cdot \mu(Y) = \frac{4 \lambda_p}{L_1}\cdot \mu(Y).
    \end{multline} 
     Now, let $$F_p:=\{U_{\bs t}x:x\in B\mbox{ and }\bs t\in\{L_1-2\lambda_p\}\times[L_2,2L_2)\times\dotsb\times[L_d,2L_d)\}.$$
     % Then an analogous computation shows that
     % $$\mu(F_p)=\frac{1}{2^{d-1}\cdot L_1}\cdot\mu(Y).$$
     Fix $z \in \Delta$ and $y \in F_p$, so that $y=U_{\bs a}x$ with $x\in B$, $a_1=L_1-2\lambda_p$, and $a_j\in[0,L_j)$ for $j=2,\dotsc,d$.
     By \eqref{eq:existswksk}, there exists $k\leq K_p$ such that
     $$|a_1-z+w_{k1}+t_1|\in[L_1-4\lambda_p,L_1)\mbox{ for all }t_1 \in [0,s_{k1}).$$
     Since $|w_{kj}+s_{kj}|\leq L_j$ for $j=2,\dotsc,d $, we conclude that
    \begin{multline*}
        N_{\mathcal{B}}\chi_{H_p}(U_{-z,0,\dotsc, 0} y)=N_{\mathcal{B}}\chi_{H_p}(U_{-z,0, \dotsc, 0 }U_{\bs a } x) \\
        \geq  \frac{1}{s_{k1}\ldots s_{kd}} \int_{\bs t \in B_k} \chi_{{H}_p}(U_{-z + a_1+ t_1 , a_2+t_2, \ldots, a_d + t_d}x) \, \dd \bs t= 1.
    \end{multline*}
   This implies that
    $$\left\{{N_{\mc B}}\chi_{H_p}>1- \varepsilon\right\}\supset \bigcup_{z\in \Delta}U_{-z,0,\dotsc, 0}F_p.$$
    Thus, precisely as in \eqref{eq:H_p}, we have that
    $$\mu\left(\bigcup_{z\in \Delta}U_{-z,0,\dotsc, 0}F_p\right)=\frac{\textup{Leb}\left(\Delta\right)\cdot L_2\dotsm L_d}{L_1\cdot 3L_2\dotsm 3L_d}\cdot\mu(Y)\geq \frac{p\cdot 4\lambda_p}{3^{d-1}\cdot L_1}\cdot\mu(Y),$$
    whence
     $$\frac{\mu\left(N_{\mathcal{B}}\chi_{H_p}>1- \varepsilon\right)}{\mu( H_p) } \geq \frac{p\cdot 4\lambda_p}{3^{d-1}\cdot L_1} \cdot \frac{L_1}{4\lambda_p}= \frac{p}{3^{d-1}},$$
     showing \eqref{eq:star} for the operator ${N_{\mc B}}$. To conclude, note that if $\mathcal{B}':=\{B_k\in\mc B:k\geq n\}$, then \eqref{eq:ctslinearfail} holds for the collection $\mc B'$, and the above argument also applies to the operator $N_{\mc B'}$.

\section{Proof of Theorem \ref{thm:cont3}}
    
Choose vectors $\bs u,\bs v_1,\dotsc,\bs v_m\in\mb R^d$ such that
$$U:=\{\bs u+\lambda_1\bs v_1+\dotsb +\lambda_m\bs v_m:\lambda_1,\dotsc,\lambda_m\in(0,1)\}\subset M\cap\pi$$
is an open set in $M$. Since $\pi$ does not contain the origin, we may always assume that $\bs u$ and $\bs v_1,\dotsc,\bs v_m$ are linearly independent. Then for any measurable set $E\subset X$, any $x\in X$, and any $t>0$ we have that
\begin{equation}
\label{eq:1}
\int_{tM}\chi_E(\bs a.x)\,\dd \vol_m(\bs a)\geq \int_{tU}\chi_E(\bs a.x)\,\dd\vol_m(\bs a).    
\end{equation}
Let us study the integral at the right-hand side. Consider the parametrization of $tU$ given by
$$\varphi_t(\bs\lambda)=t\bs u+t\lambda_1\bs v_1+\dotsb +t\lambda_m\bs v_m.$$
Then, if $V:=(\bs v_1,\dotsc,\bs v_m)$, we have that
\begin{equation}
\label{eq:2}
\int_{tU}\chi_E(\bs a.x)\,\dd\vol_m(\bs a)=\int_{(0,1)^m}\chi_{E}(\varphi_t(\bs\lambda).x)\cdot t^m\sqrt{\det\left(V^{T}V\right)}\,\dd \bs\lambda.    
\end{equation}
Note that, since the vectors $\bs v_1,\dotsc,\bs v_m$ are linearly independent, the determinant is non-null. By combining \eqref{eq:1} and \eqref{eq:2}, we deduce that
\begin{equation}
\label{eq:2.25}
\frac{1}{t^m\cdot \vol_m(M)}\int_{tM}\chi_E(\bs a.x)\,\dd\vol_m(\bs a)\geq \frac{\sqrt{\det\left(V^{T}V\right)}}{\vol_{m}(M)}\int_{(0,1)^m}\chi_{E}(\varphi_t(\bs\lambda).x)\,\dd\bs \lambda. 
\end{equation}

Now, let us consider a new action of $\mb R^{m+1}$ on $X$ (which we denote by $``.."$), defined by the relations:
$$\bs e_0..x:=\bs u.x\quad\mbox{and}\quad \bs e_i..x:=\bs v_i.x\mbox{ for }i=1,\dotsc,m.$$
Then for $\bs\lambda=(\lambda_1,\dotsc,\lambda_m)$ we have that
\begin{multline}
\label{eq:2.5}
\int_{(0,1)^m}\chi_{E}(\varphi(\bs\lambda).x)\,\dd\bs \lambda=\int_{(0,1)^m}\chi_{E}\Big((t\bs e_0+t\lambda_1\bs e_1+\dotsb+ t\lambda_m\bs e_m)..x\Big)\,\dd\bs \lambda \\
=\frac{1}{t^m}\int_{(0,t)^m}\chi_{E}\Big((t\bs e_0+\lambda_1\bs e_1+\dotsb+ \lambda_m\bs e_m)..x\Big)\,\dd\bs \lambda.
\end{multline}
For $k\in\mb N$ let
$$B_k:=[k-1,k)\times [0,k)^{m}.$$
Note that, in the notation of Theorem \ref{thm:cont2}, we have that $s_{k1}=1$ for all $k$. Thus, by Remark \ref{rmk:lkitoinfty}, Condition \eqref{eq:ctslinearfail} holds for $i=1$. Moreover, the action of $\mb R^{m+1}$ defined above is aperiodic, since $\bs u,\bs v_1,\dotsc,\bs v_m$ are linearly independent, and it commutes with a mixing family of transformations on $X$, by Remark \ref{rmk:subgrab}.  Thus, by Theorem \ref{thm:cont2}, for any $\varepsilon>0$ there exist a measurable set $E_{\varepsilon}\subset X$ and a sequence of integers $k_r$ such that for all $r$ and $\mu$-a.e. $x\in X$ it holds that
$$\frac{1}{k_r^m}\int_{[k_r-1,k_r)\times [0,k_r)^m}\chi_{E_{\varepsilon}}\Big((\lambda_0\bs e_0+\lambda_1\bs e_1+\dotsb +\lambda_m\bs e_m)..x\Big)\,\dd \lambda_0\dd\bs\lambda>0.99.$$
By Fubini's Theorem, there must be a real number $t_r\in[k_{r}-1,k_r)$ such that
$$\frac{1}{k_r^m}\int_{[0,k_r)^m}\chi_{E_{\varepsilon}}\Big((t_r\bs e_0+\lambda_1\bs e_1+\dotsb+\lambda_m\bs e_m)..x\Big)\dd \bs\lambda> 0.99,$$
whence
\begin{multline*}
\frac{1}{k_r^m}\int_{(0,t_r)^m}\chi_{E_{\varepsilon}}\Big((t_r\bs e_0+\lambda_1\bs e_1+\dotsb+\lambda_m\bs e_m)..x\Big)\dd \bs\lambda \\
=\frac{1}{k_r^m}\int_{[0,k_r)^m}\chi_{E_{\varepsilon}}\Big((t_r\bs e_0+\lambda_1\bs e_1+\dotsb+\lambda_m\bs e_m)..x\Big)\dd \bs\lambda+O_m(k_r^{-1})>0.99+O_m(k_r^{-1}).
\end{multline*}
From this, we deduce that
\begin{multline}
\label{eq:3}
\frac{1}{t_r^m}\int_{(0,t_r)^m}\chi_{E_{\varepsilon}}\Big((t_r\bs e_0+\lambda_1\bs e_1+\dotsb \lambda_m\bs e_m)..x\Big)\dd \bs\lambda\\>0.99\cdot\frac{k_{r}^m}{t_r^m}+O_m(t_r^{-1})=0.99+O_m(t_r^{-1}).    
\end{multline}
Combining \eqref{eq:2.25}, \eqref{eq:2.5}, and \eqref{eq:3}, we conclude that for all $r$ it holds that 
$$\frac{1}{t_r^m\cdot \vol_m(M)}\int_{t_rM}\chi_E(\bs a.x)\,\dd \vol_m(\bs a)\geq 0.99\cdot \frac{\sqrt{\det\left(V^{T}V\right)}}{\vol_{m}(M)}+O_{m,M,V}(t_r^{-1}).$$
This contradicts the convergence to $\mu(\chi_{E_{\varepsilon}})$ if $\varepsilon$ is sufficiently small.

\bibliographystyle{alpha}
\bibliography{References}

\end{document}